\numberwithin{equation}{section}
\newtheorem{theorem}{Theorem}[section]
\newtheorem{lemma}[theorem]{Lemma}%[section]
\newtheorem{proposition}[theorem]{Proposition}
\theoremstyle{definition}
\newtheorem{definition}[theorem]{Definition}%[section]
\newtheorem{example}[theorem]{Example}%[section]
\newcommand{\lex}{\,\overrightarrow{\times}\,}
\newcommand{\Rad}{\mbox{\rm Rad}}
\begin{document}
\title[$n$-dimensional Observables on $k$-Perfect MV-Algebras and $k$-Perfect Effect Algebras. I.]{$n$-dimensional Observables on $k$-Perfect MV-Algebras and $k$-Perfect Effect Algebras. I. Characteristic Points}
\author[A. Dvure\v{c}enskij, D. Lachman]{Anatolij Dvure\v{c}enskij$^{1,2}$, Dominik Lachman$^2$}
\maketitle

\begin{center}  \footnote{Keywords: $n$-dimensional observable; $n$-dimensional spectral resolution; characteristic point; unital po-group; interpolation; $k$-perfect MV-algebra; lexicographic MV-algebra; $k$-perfect effect algebra

 AMS classification: 06D35, 06F20, 81P10

The first author acknowledges the support by
the Slovak Research and Development Agency under contract APVV-16-0073 and the grant VEGA No. 2/0142/20 SAV, and the second author acknowledges the support by the Austrian Science Fund (FWF): project I 4579-N and the Czech Science Foundation (GA\v CR): project 20-09869L.}
Mathematical Institute,  Slovak Academy of Sciences\\
\v Stef\'anikova 49, SK-814 73 Bratislava, Slovakia\\
$^2$ Depart. Algebra  Geom.,  Palack\'{y} Univer.\\
17. listopadu 12, CZ-771 46 Olomouc, Czech Republic\\

E-mail: {\tt
dvurecen@mat.savba.sk,\quad dominiklachman@seznam.cz}
\date{}%
\end{center}
%\maketitle

\begin{abstract}
In the paper, we investigate a one-to-one correspondence between $n$-dimensional observables and $n$-dimensional spectral resolutions with values in a kind of a lexicographic form of quantum structures like perfect MV-algebras or perfect effect algebras. The multidimensional version of this problem is more complicated than a one-dimensional one because if our algebraic structure is $k$-perfect for $k>1$, then even for the two-dimensional case we have more characteristic points. The obtained results are also applied to existence of an $n$-dimensional meet joint observable of $n$ one-dimensional observables on a perfect MV-algebra.
The results are divided into two parts. In Part I, we present notions of $n$-dimensional observables and $n$-dimensional spectral resolutions with accent on lexicographic type effect algebras and lexicographic MV-algebras. We concentrate on characteristic points of spectral resolutions and the main body is in Part II where one-to-one relations between observables and spectral resolutions are presented.
\end{abstract}

\section{Introduction}%1

Mathematical foundations of quantum mechanics are going back to the thirties when a seminal paper \cite{BiNe} has appeared in which it was shown that events that can be observed during quantum mechanical measurement do not fulfil axioms of Boolean algebras and rather of a more complicated structure which we call today a quantum structure. The basic models of quantum structures are  Boolean algebras, orthomodular lattices and posets, orthoalgebras, and since the beginning of the nineties also effect algebras presented in \cite{FoBe}. The latter ones are very successful structures because they combine both sharp and unsharp (fuzzy) features of quantum measurements and in many situations they are connected with po-groups and even with lattice ordered groups. An orthodox example of effect algebras is the system $\mathcal E(H)$ of all Hermitian operators of a Hilbert space $H$ that are between the zero and the identity operators. They form an interval in the unital po-group $\mathcal B(H)$ of all Hermitian operators of $H$. Another important structure connected with a Hilbert space is the system $\mathcal P(H)$ of all orthogonal projectors on $H$. These effect algebras contain also MV-effect algebras, which are equivalent to MV-algebras that describe the \L ukasiewicz infinite valued logic, \cite{Cha}. MV-algebras play an analogous role in effect algebras as Boolean algebras do in orthomodular lattices.

A special kind of MV-algebras are so-called perfect MV-algebras, i.e. MV-algebras where each element either belongs to its radical or to its co-radical, and they will be in the center of our investigation. Perfect MV-algebras have an important property because the Lindenbaum algebra of the first order \L ukasiewicz logic is not semisimple, and the valid but unprovable formulas are precisely the formulas whose negations determine the radical of the Lindenbaum algebra, i.e. the co-infinitesimal elements of such an  algebra, see \cite{DiGr}.

If we measure some quantity in a classical system, we use a model of a measurable space $(\Omega,\mathcal S)$, where $\mathcal S$ is a $\sigma$-algebra of subsets of a set $\Omega \ne \emptyset$. The measurement is performed by a measurable function $f: \Omega \to \mathbb R$. The measurability of $f$ means that $f^{-1}(A) \in \mathcal S$ for each Borel sets $A\in \mathcal B(\mathbb R)$. Then the mapping $x_f:\mathcal B(\mathbb R) \to \mathcal S$ defined by $x_f(A)=f^{-1}(A)$, $A \in \mathcal B(\mathbb R)$, is a kind of a $\sigma$-homomorphism of effect algebras. We note that an observable in the case of $\mathcal P(H)$ is an orthogonal projector-valued measure and in the case of $\mathcal E(H)$ an observable is a positive operator-valued measure, see e.g. \cite{DvPu}.

If $x$ is an observable, then the mapping $F_x(t)=x((-\infty,t))$, $t\in \mathbb R$, is (i) monotone, (ii) left continuous, (iii) going to $0$ and $1$ when $t$ is going to $-\infty$ and to $+\infty$, respectively. The mapping $F_x$ is said to be a spectral resolution corresponding to an observable $x$. If a mapping $F:\mathbb R \to M$ satisfies (i)--(iii), the question is whether there exists an observable $x$ on a $\sigma$-complete MV-algebra $M$ such that $F= F_x$. The positive answer was given in \cite{DvKu} also for spectral resolutions on monotone Dedekind $\sigma$-complete effect algebras with RDP. We note that one of the first studies which shows a one-to-one correspondence between observables in $\mathcal P(H)$ and spectral resolutions was established in \cite{Cat} for quantum logics.
This one-to-one correspondence between observables and spectral resolutions was extended for $\Rad$-Dedekind $\sigma$-complete perfect MV-algebras in \cite{DDL}, however it is important to note that the notion of a spectral resolution was necessary to strengthen. This one-to-one relation was established also for $k$-perfect MV-algebras in \cite{DvLa} and for general lexicographic MV-algebras and lexicographic effect algebras in \cite{DvLa1,DvLa2}.

If we simultaneously measure $n$ quantities, $n\ge 1$, then we have an $n$-dimensional random vector $T:\Omega \to \mathbb R^n$ such that $T^{-1}(A) \in \mathcal S$ for each $n$-dimensional Borel set $A \in \mathcal B(\mathbb R^n)$ and again $x_T(A):=T^{-1}(A)$, $A \in \mathcal B(\mathbb R^n)$, is a kind of a $\sigma$-homomorphism. Therefore, in the paper we will study $n$-dimensional observables $x$ and $n$-dimensional spectral resolutions as mappings $F$ from $\mathbb R^n$ into the algebraic structure which are monotone, left-continuous with non-negative increments, going to $0$ if one variable goes to $-\infty$ and going to $1$ if all variables go to $+\infty$. The one-to-one correspondence between $n$-dimensional observables and $n$-dimensional spectral resolutions have been established in \cite{DvLa3} for $\sigma$-complete MV-algebras and for monotone $\sigma$-complete effect algebras with RDP.

The principal aims of the present contribution are:

(1) To establish a one-to-one correspondence between $n$-dimensional observables and $n$-dimensional spectral resolutions for perfect and $k$-perfect MV-algebras and $k$-perfect effect algebras.

(2) For spectral resolutions $F$ on lexicographic type algebras, there appear characteristic points of $F$ which for case of $\sigma$-complete MV-algebras or Dedekind monotone $\sigma$-complete effect algebras do not exist.

(3) As for one-dimensional observables, it was necessary to strengthen the notion of an $n$-dimensional spectral resolutions. In addition, if $n\ge 2$ and $k>1$, there appeared a new phenomenon that if $M_i=\{(i,a)\colon  (i,a)\in \Gamma(\mathbb Z \lex G,(k,0))\}$, $i=1,\ldots,k$, and $T_i=\{(s_1,\ldots,s_n)\in \mathbb R^n\colon F(s_1,\ldots,s_n) \in M_i\}$, then it can happen that $T_i$ defines more characteristic points which if $n=1$ or $k=1$ is not possible.

(4) To describe characteristic points at least for $n=2$. That is, to show that every $T_i$ describes only finitely many characteristic points. For higher dimensions such a description is unknown. The results are then applied to existence of a kind of a joint observable of $n$ one-dimensional observables and to show how we can define a sum of $n$-dimensional observables on perfect MV-algebras.

The paper is organized as follows. Part I. Section 2 gathers the basic facts on effect algebras, MV-algebras, and their $k$-perfect forms. Section 3 describes $n$-dimensional observables, their $n$-dimensional spectral resolutions on perfect MV-algebras, and it shows some of their basic properties. Section 4 studies characteristic points for $k$-perfect MV-algebras. We show that in the two-dimensional case, every two-dimensional spectral resolution defines only finitely many characteristic points, and some illustrating examples are present, too.

Part II starts with Section 5 where we strengthened the definition of an $n$-dimensional spectral resolution which will hold for lexicographic MV-algebras and lexicographic effect algebras. For $n=2$, we establish a one-to-one correspondence between spectral resolutions and observables for perfect MV-algebras and perfect effect algebras. In Section 6, we study some limit properties of spectral resolutions and extend the notion of an $n$-dimensional spectral resolution also for limits. Section 7 shows that every $n$-dimensional spectral resolution on a perfect MV-algebra and on a perfect effect algebra can be uniquely extended to an $n$-dimensional observable. This extension for $n$-dimensional spectral resolutions on $k$-perfect MV-algebras and $k$-perfect effect algebras is presented for spectral resolutions with the ordering property. An application of the gained results shows that for $n$ one-dimensional observables there is a some kind of a joint $n$-dimensional observable, see Section 9.

\section{$k$-perfect MV-algebras and $k$-perfect Effect Algebras}%2

Effect algebras were introduced in \cite{FoBe} as follows: We say that an {\it effect algebra} is a partial algebra $E = (E;+,0,1)$ with a partially defined operation $+$ and with two constant elements $0$ and $1$ such that, for all $a,b,c \in E$,
\begin{enumerate}
\item[(i)] $a+b$ is defined in $E$ iff $b+a$ is defined, and in
which  case $a+b = b+a$;

\item[(ii)] $a+b, (a+b)+c$ are defined iff $b+c$ and $a+(b+c)$ are
defined, and in which case $(a+b)+c = a+(b+c)$;

\item[(iii)] for any $a \in E$, there exists a unique element $a'
\in E$ such that $a+a'=1$;

\item[(iv)] if $a+1$ is defined in $E$, then $a=0$.
\end{enumerate}

The partial operation $+$ yields a partial ordering $\le$ on $E$ defined by $a\le b$ iff there is $c \in E$ such that $a+c=b$; we write $c=b-a$. Then $0\le a \le 1$ for each $a \in E$. If $E$ under $\le$ is a lattice, we call $E$ a {\it lattice effect algebra}. A lattice effect algebra $E$ is an {\it MV-effect algebra} if $a \wedge b = 0$ implies $a + b$ is defined in $E$ for $a,b \in E$. It is well-known that MV-effect algebras are arising from MV-algebras, see \cite[Thm 1.8.12]{DvPu}. We recall that an MV-{\it algebra} is an algebra $(M;\oplus,',0,1)$ (henceforth written simply as $M=(M;\oplus,',0,1)$) of type $(2,1,0,0)$, where $(M;\oplus,0)$ is a commutative monoid with the neutral element $0$ and for all $a,b\in M$, such that we have:
\begin{enumerate}
	\item[(i)] $a''=a$;
	\item[(ii)] $a\oplus 1=1$;
	\item[(iii)] $a\oplus (a\oplus b')'=b\oplus (b\oplus a')'$;
    \item[(iv)] $0'=1$.
\end{enumerate}
\noindent
In any MV-algebra $(M;\oplus,',0,1)$, we can also define the following term operation:
\[
a\odot b:=(a'\oplus b')'.
\]
\noindent

If we define a partial operation $+$ on an MV-algebra $M$ by $a+b$ is defined iff $a\le b'$ (equivalently $a\odot b = 0$), and in such a case, we put $a+b=a\oplus b$. Then $(M;+,0,1)$ is exactly an MV-effect algebra. Conversely, if $E$ is an MV-effect algebra, then there is an MV-algebra $M$ such that $(M;+,0,1)\cong (E;+,0,1)$. For more info about effect algebras see \cite{FoBe,DvPu} and about MV-algebras see \cite{CDM}.

Effect algebras and specially MV-algebras are connected with Abelian partially ordered groups. We say that an Abelian group $(G;+,0)$ endowed with a partial ordering $\le$ is a {\it partially ordered group} (po-group, in abbreviation) if $g\le h$ implies $g+k\le h+k$ for each $k\in G$. If the partial order is a lattice order, then $G$ is said to be an $\ell$-group. An element $u$ of a po-group $G$ is a {\it strong unit} if, for each $g\in G$, there is an integer $n\ge 1$ such that $g\le nu$. A couple $(G,u)$, where $G$ is a po-group with a fixed strong unit, is said to be a {\it unital group}. The positive (negative) cone of a po-group $G$ is the set $G^+=\{g \in G \colon g\ge 0\}$ ($G^-=\{g \in G\colon g\le 0\}$).

If $(G,u)$ is a unital $\ell$-group, then $\Gamma(G,u)=([0,u];\oplus,',0,u)$, where $[0,u]=\{g\in G\colon 0\le g \le u\}$, $a\oplus b= (a+b)\wedge u$, $a'=u-a$, is a prototypical example of an MV-algebra as it follows from the Mundici's representation result, see \cite{Mun}. If $(G,u)$ is a unital po-group, then $\Gamma_{ea}(G,u)=([0,u];+,0,1)$, where $a+b$ is the sum of $a$ and $b$ in $G$ if the sum is in the interval $[0,u]$, is a so-called {\it interval effect algebra}. Hence, $\Gamma_{ea}(G,u)$ is an MV-effect algebra iff $(G,u)$ is a unital $\ell$-group, and an effect algebra $E$ is an MV-effect algebra iff $E\cong \Gamma_{ea}(G,u)$ for some unital $\ell$-group $(G,u)$, compare \cite[Thm 1.8.12]{DvPu}.

A po-group $G$ is with {\it interpolation} if $g_1,g_2\le h_1,h_2$ entails an element $g\in G$ such that $g_1,g_2\le g \le h_1,h_2$. Every $\ell$-group is with interpolation. An effect algebra satisfies the {\it Riesz Decomposition Property} (RDP for short) if $a_1 + a_2 = b_1 + b_2$ implies there exist four elements $c_{11}, c_{12}, c_{21}, c_{22} \in E$ such that $a_1 = c_{11} + c_{12},$ $a_2 = c_{21} + c_{22}$, $b_1 = c_{11} + c_{21}$, and $b_2= c_{12} + c_{22}$. The basic result of Ravindran says that there is a one-to-one representation between effect algebras with RDP and unital po-groups with interpolation, see \cite{Rav}.

Let $H$ and $G$ be two po-groups. Then $H\lex G$ is the {\it lexicographic product} of $H$ with $G$  if on the direct product $H\times G$ we introduce the lexicographic order: $(h_1,g_1)\le (h_2,g_2)$ iff either $h_1<h_2$ or $h_1= h_2$ and $g_1\le g_2$. Then (1) $H\lex G$ is an interpolation group iff both $H$ and $G$ are interpolation groups and either $H$ satisfies strict interpolation or $G$ is directed, \cite[Cor 2.12]{Goo}, (2) $H\lex G$ is an $\ell$-group if $H$ is linearly ordered and $G$ is an $\ell$-group, \cite[(d) p. 26]{Fuc}. For more info about po-groups, we recommend to consult with \cite{Goo, Fuc}. We note that all groups used in the paper will be Abelian written additively.

We say that a poset $G$ (in particular a po-group $G$) is {\it monotone $\sigma$-complete} (or {\it Dedekind monotone $\sigma$-complete}) provided that every ascending (descending) sequence $x_1\le x_2\le \cdots$ ($x_1\ge x_2 \ge \cdots$) in $G$ which is bounded above (below) in $G$ has a supremum (infimum) in $G$.

Let $E$ be an effect algebra. Given finitely many elements $a_1,\ldots,a_n\in E$, associativity of $+$ allows us to define their sum $a= a_1+\cdots + a_n$ unambiguously if it exists; in such a case, elements $a_1,\ldots, a_n$ are said to be {\it summable} with the sum $a=a_1+\cdots+a_n:= \sum_{i=1}^na_i$. Clearly, if $i_1,\ldots,i_n$ is any permutation of $1,\ldots,n$, then $\sum_{i=1}^na_i=\sum_{j=1}^na_{i_j}$.
A sequence $(a_n)_n$ of elements of an effect algebra $E$ is {\it summable} if every finite subsystem of $(a_n)_n$ is summable. If the element
$$a=\bigvee\{\sum_{i\in F}a_i\colon \text{ $F$ is any finite subset of } \mathbb N\}
$$
exists in $E$, $a$ is said to be the {\it sum} of $(a_n)_n$, and we write $a=\sum_{n=1}^\infty a_n$. We note that $a$ does not depend on the ordering of $(a_n)_n$, and if a system is summable, then it does not mean automatically that it has a sum. Indeed, let $E=\Gamma_{ea}(\mathbb Z\lex \mathbb Z,(1,0))$, then the system $(a_n)_n$, where $a_n=(0,1)$ for each $n\ge 1$, is summable but its sum does not exist in $E$.

In the center of our investigation, we investigate algebraic structures which are of the form $E=\Gamma_{ea}(H\lex G,(u,0))$, where $(H,u)$ is a unital po-group and $G$ is a directed monotone $\sigma$-complete po-group with interpolation or they are lexicographic MV-algebras, see \cite{DFL}, i.e. ones of the form $M=\Gamma(H\lex G,(u,0))$, where $(H,u)$ is a linear unital po-group and $G$ is a Dedekind $\sigma$-complete $\ell$-group. For each $h\in [0,u]_H:=\{h \in H\colon 0\le h \le u\}$, we denote by $E_h$ the set of elements of $E$, whose first coordinate is $h$. Clearly, $E= \bigcup\{E_h\colon h \in [0,u]_H\}$ is a disjoint union of all $E_h$ with $h \in [0,u]_H$, and $E_h\ne \emptyset$. We can also write $E=(E_h\colon h \in [0,u]_H)$. Clearly if $h_1,h_2 \in [0,u]_H$, $h_1<h_2$, then $E_{h_1}\le E_{h_2}$, i.e. each element of $E_{h_1}$ is less than any element of $E_{h_2}$. As posets, the set $E_0$ is isomorphic to $G^+$, $E_u$ is isomorphic to $G^-$ and all other $E_h$'s are isomorphic to $G$. Analogously we write $M=(M_h\colon h \in [0,h]_H)$.

Now, we will concentrate to MV-algebras. Given $x\in M$, we put
$$
0x:=0,\quad 1x=x, \quad (n+1)x= nx +x, \text{ if $nx +x$ exists in } M,\ n\ge 1.
$$
We denote by $\Rad(M)$ the {\it radical} of $M$, i.e. the set of $x\in M$ such that $nx$ exists in $M$ for each $n\ge 1$. An MV-algebra is {\it perfect} if every element $x\in M$ is either from $\Rad(M)$ or is from its co-radical $\Rad(M)'=\{a'\colon a\in \Rad(M)\}$. An MV-algebra $M$ is {\it $\Rad$-Dedekind $\sigma$-complete} if the radical $\Rad(M)$ is a Dedekind $\sigma$-complete poset, i.e. if $(a_n)_n$ is a a sequence of elements from $\Rad(M)$ which is bounded from above by an element $b\in \Rad(M)$, then $\bigvee_n a_n$ exists in $M$ and it belongs to $\Rad(M)$. According to \cite{DiLe}, an MV-algebra $M$ is perfect iff there is an $\ell$-group $G$ such that $M \cong \Gamma(\mathbb Z \lex G,(1,0))$, where $\mathbb Z$ is the group of integers. Therefore, we say that an MV-algebra $M$ is $k$-perfect, where $k\ge 1$ is an integer, if $M\cong \Gamma(\mathbb Z \lex G,(k,0))$. Clearly a $1$-perfect MV-algebra is simply a perfect MV-algebra. In addition, a    $k$-perfect MV-algebra $M$ is $\Rad$-Dedekind $\sigma$-complete iff $M \cong \Gamma(\mathbb Z \lex G,(k,0))$, where $G$ is a Dedekind $\sigma$-complete $\ell$-group, see \cite[Thm 2.5]{DvLa}.

In this analogy, if $G$ is a directed monotone $\sigma$-complete po-group and $k\ge 1$ is an integer, then every effect algebra isomorphic to $\Gamma_{ea}(\mathbb Z\lex G,(k,0))$ is said to be a $k$-{\it perfect effect algebra}. Often we need that $G$ is with interpolation and Dedekind monotone $\sigma$-complete. Such effect algebras for $k=1$ were studied in \cite{Dvu1}.

The following simple but useful statements were established in \cite{DvLa1}, \cite[Lem 3.2]{DvLa2}.

\begin{lemma}\label{le:2.1}
Let $M=\Gamma(H\lex G,(u,0))$, where $(H,u)$ is a linearly ordered unital group and $G$ is a Dedekind $\sigma$-complete $\ell$-group.

{\rm (1)} Let $(x_n)_n$ be a monotone sequence of elements from
$M$. Then $\bigvee_n x_n$ $(\bigwedge_n x_n)$
exists and belongs to $M_h$ for some $h \in [0,u]_H$ if and only if there is
some upper (lower) bound $x\in M_h$ of $(x_n)_n$ and there is some
$x_n\in M_h$.

{\rm (2)} Given $h \in [0,u]_H$, if $(a_n)_n$ is a sequence of elements from $M_h$ which is bounded above (below) by some element $a\in M_h$, then $\bigvee_na_n \in M_h$ $(\bigwedge_a a_n\in M_h)$.

{\rm (3)} Let $(a_n)_n$ be a summable finite or infinite sequence of elements of $M$. Then all but finitely many $a_n$'s belong to $M_0$ and every subsequence $(a_{n_i})_i$ of $(a_n)_n$ is summable with sum in $M$.
\end{lemma}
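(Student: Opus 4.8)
The plan is to exploit the block decomposition $M=\bigcup_h M_h$ together with the two structural facts recorded before the lemma: the blocks are linearly ordered (every element of $M_{h_1}$ is strictly below every element of $M_{h_2}$ whenever $h_1<h_2$), and as posets $M_0\cong G^+$, $M_u\cong G^-$, and $M_h\cong G$ for $0<h<u$, so that the Dedekind $\sigma$-completeness of $G$ is available inside each block. I would prove (2) first, since (1) reduces to it. For (2) the sequence $(a_n)_n$ lies in a single block $M_h$, which is order-isomorphic to a subset of $G$; a bound $a\in M_h$ yields a bound for the corresponding sequence in $G$, so Dedekind $\sigma$-completeness produces a supremum $s$ there, and $s$ lands in $M_h$ because the sign constraint ($\ge 0$ for $h=0$, $\le 0$ for $h=u$) is inherited from the $a_n$. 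It then remains to check that the $G$-supremum computed inside $M_h$ is the supremum in all of $M$, and this is exactly where the linear ordering of the blocks enters: any upper bound in $M$ cannot lie in a block below $M_h$, if it lies in $M_h$ it dominates $s$, and if it lies higher it exceeds $s$ automatically.

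For (1), in the \emph{if} direction monotonicity together with the existence of one term $x_{n_0}\in M_h$ and an upper bound $x\in M_h$ squeezes the first coordinates of the whole tail $(x_n)_{n\ge n_0}$ between $h$ and $h$, forcing the tail into $M_h$; then (2) applies to the tail and gives $\bigvee_n x_n\in M_h$. In the \emph{only if} direction I would argue by contradiction: if $\bigvee_n x_n=x\in M_h$ but no term lies in $M_h$, then every $x_n$ has first coordinate strictly below $h$, so \emph{every} element of $M_h$ is an upper bound; since $M_h\cong G$ has no least element, this contradicts $x$ being the \emph{least} upper bound. Hence some $x_n\in M_h$, and $x$ is the required bound in $M_h$.

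For (3), write $a_n=(h_n,g_n)$; all first coordinates are $\ge 0$, and summability forces $\sum_{i\in F}h_i\le u$ in $H$ for every finite $F$. Since $H$ is linearly ordered with strong unit $u$, only finitely many $h_n$ can be strictly positive, as otherwise a finite partial sum of first coordinates would exceed $u$; thus all but finitely many $a_n$ lie in $M_0$. Summability of any subsequence is immediate, since each of its finite subsystems is a finite subsystem of $(a_n)_n$. To locate the sum I would split off the finitely many terms outside $M_0$, add them in $M$ to a single element $b\in M_{h_0}$ with $h_0=\sum h_n$, and recognize the remaining finite partial sums as $b$ augmented by elements of $M_0\cong G^+$; these all sit in the single block $M_{h_0}$, so by (1)--(2) the sum, \emph{when it exists}, is again an element of $M_{h_0}\subseteq M$.

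The main obstacle I expect is the only-if direction of (1), since its entire content is the \emph{no-gap} phenomenon peculiar to the lexicographic order: a bounded monotone sequence can fail to have a supremum precisely because its upper bounds accumulate at the bottom of a higher block, which has no least element. Making this precise requires careful bookkeeping of which block each term and each candidate bound occupies, and it is the same mechanism that governs the delicate ``sum in $M$'' clause of (3), where the sum of a summable sequence may genuinely fail to exist for exactly this reason.
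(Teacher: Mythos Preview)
The paper does not supply its own proof of this lemma; it simply records that the statements were established in \cite{DvLa1} and \cite[Lem 3.2]{DvLa2}. Your treatment of (2) and of both directions of (1) is correct and is exactly the argument one would expect: compute the supremum in the $G$-coordinate inside the relevant block and then use the linear ordering of the blocks to see that no smaller upper bound can live elsewhere; for the only-if direction of (1) the key observation that $M_h$ has no least element (for $h>0$ and $G\neq\{0\}$) is the right one.

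Your argument for (3), however, contains a genuine gap. The step ``only finitely many $h_n$ can be strictly positive, as otherwise a finite partial sum of first coordinates would exceed $u$'' is not valid for an arbitrary linearly ordered unital group $(H,u)$. Take $H=\mathbb R$, $u=2$, $G=\mathbb R$ and $a_n=(2^{-n},0)$: every finite partial sum has first coordinate at most $1<u$, so $(a_n)_n$ is summable in $M=\Gamma(\mathbb R\lex\mathbb R,(2,0))$, yet \emph{no} $a_n$ lies in $M_0$. Thus the first assertion of (3) is actually false at the stated level of generality, and your proof cannot be repaired without an additional hypothesis on $H$. The argument you give \emph{is} correct when $H=\mathbb Z$ --- the $k$-perfect case that is the paper's real concern --- since then each positive $h_n$ is at least $1$ and at most $u=k$ of them can occur; presumably the cited sources either restrict to this case or impose a similar discreteness condition. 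You were right, incidentally, to hedge with ``when it exists'' in the final clause: the paper's own example $a_n=(0,1)$ in $\Gamma(\mathbb Z\lex\mathbb Z,(1,0))$, given just before the lemma, shows that the sum of a summable sequence need not exist even when all terms lie in $M_0$.
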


An analogous statement holds also for $E=\Gamma_{ea}(H\lex G,(u,1))$, where $(H,u)$ is a unital po-group with interpolation and $G$ is a directed Dedekind monotone $\sigma$-complete po-group with interpolation.

\section{$n$-dimensional Observables and Perfect MV-algebras}%3

In the section, we define an $n$-dimensional observable and a corresponding $n$-dimensional spectral resolution on a lexicographic MV-algebra. We show how characteristic points can be described, and we present their basic properties.

The following definition of an $n$-dimensional observable for lexicographic MV-algebras coincides with one for $n$-dimensional observables on $\sigma$-complete MV-algebras, see \cite{DvLa}.

\begin{definition}\label{de:3.1}
{\rm Let $n\ge 1$ be an integer and let $M=\Gamma(H\lex G,(u,0))$, where $(H,u)$ is a linearly ordered unital group and $G$ is a Dedekind $\sigma$-complete $\ell$-group. A mapping $x:\mathcal B(\mathbb R^n)\to M$ is said to be an {\it $n$-dimensional observable} if (i) $x(\mathbb R^n)=1$ and (ii) if $(A_m)_m$ is a sequence of mutually disjoint subsets from $\mathcal B(\mathbb R^n)$ and $A =\bigcup_i A_i$, then the sequence $(x(A_m))_m$ is summable and $x(A)=\sum_m x(A_m)$.}
\end{definition}

The principal properties of $n$-dimensional observables are: Let $A,B,A_i, B_i\in \mathcal B(\mathbb R^n)$, then
\begin{itemize}
\item[(i)] $x(\mathbb R^n \setminus A)=x(A)'$ and $x(\emptyset)=0$;
\item[(ii)] $x$ is monotone, i.e. $x(A)\le x(B)$ whenever $A \subseteq B$ and in such a case $x(B\setminus A)= x(B)- x(A)$;
\item[(iii)] if $(A_i)_i \searrow A$, i.e. $A_{i+1}\subseteq A_i$ and $\bigcap_i A_i=A$, then $x(A) = \bigwedge_i x(A_i)$ and if $(B_i)_i \nearrow B$, then $x(B)=\bigvee_i x(B_i)$;
\item[(iv)] $x(A)+x(B)$ exists iff does $x(A\cup B)+x(A\cap B)$, and in such a case,  $x(A)+x(B)=x(A\cup B)+x(A\cap B)$.
\end{itemize}

Many interesting examples of $n$-dimensional observables on a lexicographic MV-algebra $M$ can be obtained in the following way: Let $(a_i)_i$ be a finite or infinite sequence of summable elements of $M$ with $\sum_i a_i = 1$, and let $(\mathbf t_i)_i$ be a sequence of mutually different points of $\mathbb R^n$. Then the mapping $x: \mathcal B(\mathbb R^n) \to M$ defined by

\begin{equation}\label{eq:obser}
x(A):= \sum_{i \colon \mathbf t_i \in A} a_i, \quad A\in \mathcal B(\mathbb R^n),
\end{equation}
is an $n$-dimensional observable on $M$; we note that sum over the empty set is $0$. Lemma \ref{le:2.1}(3) shows that $x$ is defined correctly.

Now, we introduce some notations for $n$-tuples. Let $(t_1,\ldots,t_n)$ and $(s_1,\ldots,s_n)$ be two $n$-tuples from $\mathbb R^n$. We write
\begin{itemize}

\item[(i)] $(t_1,\ldots,t_n)\le(s_1,\ldots,s_n)$ if $t_i\le s_i$ for each $i=1,\ldots,n$,
\item[(ii)] $(t_1,\ldots,t_n)<(s_1,\ldots,s_n)$ if $t_i\le s_i$ for each $i=1,\ldots,n$ and for some $j\in \{1,\ldots,n\}$, we have $s_j< t_j$,
\item[(iii)] $(t_1,\ldots,t_n) \ll (s_1,\ldots,s_n)$ if $t_i< s_i$ for each $i=1,\ldots,n$.
\end{itemize}
In addition, since $M=\Gamma(H\lex G,(u,0))$, then in $M$ we can do addition as well subtraction as in the group $H\lex G$.

\begin{lemma}\label{le:3.1}
Let $x$ be an $n$-dimensional observable on a lexicographic MV-algebra $M=\Gamma(H\lex G,(u,0))$, where $(H,u)$ is a linearly ordered group and $G$ is a Dedekind $\sigma$-complete $\ell$-group.
Define a mapping $F: \mathbb R^n \to M$ by
\begin{equation}\label{eq:F}
F(t_1,\ldots,t_n):= x((-\infty,t_1,)\times \cdots\times (-\infty, t_n)),\quad t_1,\ldots,t_n \in \mathbb R.
\end{equation}
Then $F$ satisfies the following basic properties
\begin{equation}\label{eq:(3.1)}
F(s_1,\ldots,s_n)\le F(t_1,\ldots,t_n) \quad \mbox{\rm if}\quad s_i\le t_i \quad \mbox{\rm for each } i=1,\ldots,n, \quad{\rm (monotony)},
\end{equation}
\begin{equation}\label{eq:(3.2)} \bigvee_{(s_1,\ldots,s_n)}F(s_1,\ldots,s_n)=1,
\end{equation}
\begin{equation}\label{eq:(3.3)} \bigvee_{(s_1,\ldots,s_n)\ll(t_1,\ldots,t_n)}F(s_1,\ldots,s_n) = F(t_1,\ldots,t_n),
\end{equation}
\begin{equation}\label{eq:(3.4)}
\bigwedge_{t_i} F(s_1,\ldots, s_{i-1},t_i, s_{i+1},\ldots, s_n)=0 \mbox{ \rm for } i=1,\ldots,n,
\end{equation}

\begin{equation}\label{eq:(3.5)}
1\ge \Delta_{1}(a_1,b_1)\big(\cdots \big(\Delta_{n}(a_n,b_n)F(s_1,
\ldots,s_n)\big)\cdots\big)\ge 0,\quad \mbox{\rm (volume condition)},
\end{equation}
where $\Delta_{i}(a_i,b_i)H(s_1,\ldots,s_n)=H(s_1,\ldots,s_{i-1},b_i, s_{i+1},\ldots,s_n) - H(s_1,\ldots,s_{i-1},a_i, s_{i+1},\ldots,s_n)$ for  $a_i\le b_i$, $i=1,\ldots,n$, where $H: \mathbb R^n \to M$.
\end{lemma}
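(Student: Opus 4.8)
The plan is to deduce all five properties directly from the basic properties (i)--(iv) of the observable $x$ recorded above together with Lemma~\ref{le:2.1}, by rewriting each assertion about $F$ as an assertion about the boxes
\[
B(t_1,\ldots,t_n):=(-\infty,t_1)\times\cdots\times(-\infty,t_n)
\]
and then invoking the monotonicity and sequential continuity of $x$. Monotony \eqref{eq:(3.1)} is immediate: if $s_i\le t_i$ for all $i$ then $B(s_1,\ldots,s_n)\subseteq B(t_1,\ldots,t_n)$, so property (ii) gives $F(s_1,\ldots,s_n)=x(B(s_1,\ldots,s_n))\le x(B(t_1,\ldots,t_n))=F(t_1,\ldots,t_n)$.

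For the three limit identities \eqref{eq:(3.2)}, \eqref{eq:(3.3)}, and \eqref{eq:(3.4)} the strategy is first to replace the possibly uncountable index set by a countable monotone sequence of boxes, which is legitimate since $F$ is monotone by \eqref{eq:(3.1)}, and then to apply property (iii). For \eqref{eq:(3.2)} I use the increasing boxes $B(m,\ldots,m)\nearrow\mathbb R^n$ together with $x(\mathbb R^n)=1$. For \eqref{eq:(3.3)} I use $B(t_1-\tfrac1m,\ldots,t_n-\tfrac1m)\nearrow B(t_1,\ldots,t_n)$, where the convergence uses that a product of coordinatewise increasing sets increases to the product of the limits; hence the supremum is $x(B(t_1,\ldots,t_n))=F(t_1,\ldots,t_n)$. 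For \eqref{eq:(3.4)} I let the $i$-th coordinate run through $-m$, obtaining decreasing boxes whose intersection is empty because $\bigcap_m(-\infty,-m)=\emptyset$, so the infimum is $x(\emptyset)=0$ by property (i). In each case the monotonicity of $F$ makes the countable subfamily cofinal, so the full supremum or infimum agrees with the sequential one, and existence of these joins and meets in $M$ is guaranteed by property (iii) (equivalently by Lemma~\ref{le:2.1}).

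The volume condition \eqref{eq:(3.5)} is the substantive point, and I would prove it by downward induction on the index of the difference operator, establishing
\[
\Delta_{i}(a_i,b_i)\big(\cdots\big(\Delta_{n}(a_n,b_n)F(s_1,\ldots,s_n)\big)\cdots\big)=x\big((-\infty,s_1)\times\cdots\times(-\infty,s_{i-1})\times[a_i,b_i)\times\cdots\times[a_n,b_n)\big).
\]
The base case $i=n$ says that $\Delta_n(a_n,b_n)F$ equals $x$ of the box carrying the half-open slot $[a_n,b_n)$ in the last coordinate, and follows from property (ii): since $a_n\le b_n$ the box with $(-\infty,a_n)$ is contained in the one with $(-\infty,b_n)$, so the difference of their $x$-values is $x$ of the set difference, namely the slot $[a_n,b_n)$. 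The inductive step is the identical computation carried out in the $(i-1)$-st coordinate on the function produced at stage $i$, again using $(-\infty,a_{i-1})\subseteq(-\infty,b_{i-1})$ and property (ii). After all $n$ operators are applied the right-hand side is $x\big([a_1,b_1)\times\cdots\times[a_n,b_n)\big)$, which no longer depends on $(s_1,\ldots,s_n)$, as it must not; being a value of $x$ it lies in $M$ and hence between $0$ and $1$, giving \eqref{eq:(3.5)}.

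I expect the only real obstacle to be organizational rather than conceptual: at each stage of the induction one must verify that the two sets entering the difference are nested in the correct direction, so that property (ii) applies and the subtraction stays inside $M$; once this nesting is checked the half-open box assembles one coordinate at a time and the inclusion--exclusion bookkeeping is automatic.
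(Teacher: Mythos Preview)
Your proof is correct and follows essentially the same approach as the paper: monotony from $B(\mathbf s)\subseteq B(\mathbf t)$, the three limit identities by reducing to countable cofinal monotone sequences of boxes and applying the sequential continuity of $x$, and the volume condition by identifying the iterated difference with $x\big([a_1,b_1)\times\cdots\times[a_n,b_n)\big)$. The paper's own proof is considerably terser---it simply asserts the identity $x(A)=\Delta_1\cdots\Delta_n F$ for the half-open box $A$ without spelling out the induction---so your explicit coordinate-by-coordinate argument, checking at each stage that the two boxes are nested so that property~(ii) applies, is a welcome elaboration rather than a different method.
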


\begin{proof}
Monotonicity of $F$ follows from the monotonicity of $x$. The suprema and infimum on the left-hand side of (\ref{eq:(3.2)})--(\ref{eq:(3.4)}) exist thanks to monotonicity and density of rational numbers.

Volume property (\ref{eq:(3.4)}) can be obtained in the following way: We have $\emptyset = \emptyset \times (-\infty,s_2)\times \cdots \times (-\infty, s_n) = \bigcap_n ((-\infty,-n)\times (-\infty,s_2)\times \cdots \times (-\infty, s_n))$, and it implies (\ref{eq:(3.4)}).

Equality (\ref{eq:(3.5)}) follows from the following. Let $A=[a_1,b_1)\times \cdots\times [a_n,b_n)$, where $a_i\le b_i$ for each $i=1,\ldots,n$. Then $x(A)= \Delta_{1}(a_1,b_1)\cdots \Delta_{n}(a_n,b_n)F(s_1,\ldots,s_n)\ge 0$.
\end{proof}

We note that $\Delta_{1}(a_1,b_1)\big(\cdots \big(\Delta_{n}(a_n,b_n)F(s_1,
\ldots,s_n)\big)\cdots\big)$ can be read/written also as
$$
\Delta_{1}(a_1,b_1)\big(\cdots \big(\Delta_{n}(a_n,b_n)F(s_1,
\ldots,s_n)\big)\cdots\big)=\big[\cdots\big[ F(s_1,\ldots,s_n)\big]_{s_n=a_n}^{b_n}\cdots\big]_{s_1=a_1}^{b_1}.
$$

The function $F:\mathbb R^n\to M$ defined in the latter proposition is said to be an $n$-{\it dimensional spectral resolution} corresponding to $x$, we write then also $F=F_x$. In general, every mapping $F:\mathbb R^n\to M$ satisfying (\ref{eq:(3.1)})--(\ref{eq:(3.5)}) is said to be an $n$-{\it dimensional spectral resolution}. Our main task is to show when an $n$-dimensional spectral resolution $F$ implies that there is an $n$-dimensional observable $x$ on $M$ such that $F$ is an $n$-dimensional spectral resolution corresponding to $x$.

In addition, it is possible to show the following:

(1) if $\big((t^k_1,\ldots,t^k_n)\big)_k \nearrow (t_1,\ldots,t_n)$, $(t^k_1,\ldots,t^k_n) \ll (t_1,\dots,t_n)$ for $k\ge 1$, then
$$
\bigvee_{ (t^k_1,\ldots,t^k_n)\ll (t_1,\ldots,t_n)} F(t^k_1,\ldots,t^k_n) = F(t_1,\ldots, t_n).
$$

(2) If $(i_1,\ldots,i_n)$ is any permutation of $(1,\ldots,n)$, then
$$
\Delta_{1}(a_1,b_1)\big(\cdots \big(\Delta_{n}(a_n,b_n)F(s_1,\ldots,s_n)\big) \cdots\big) = \Delta_{i_1}(a_{i_1},b_{i_1})\big(\cdots\big( \Delta_{i_n}(a_{i_n},b_{i_n})F(s_1,\ldots,s_n)\big) \cdots\big).
$$
Therefore, without loss of readability, we can write
\begin{align*}
\Delta_{1}(a_1,b_1)\cdots \Delta_{n}(a_n,b_n)F(s_1,
\ldots,s_n)&=\Delta_{1}(a_1,b_1)\big(\cdots \big(\Delta_{n}(a_n,b_n)F(s_1,
\ldots,s_n)\big)\cdots\big)\\
&= \Delta_{i_1}(a_{i_1},b_{i_1})\cdots \Delta_{i_n}(a_{i_n},b_{i_n})F(s_1,\ldots,s_n).
\end{align*}

(3) If $i_1,\ldots,i_k$ are mutually different integers from $\{1,\ldots,n\}$ for $1\le k< n$, then
\begin{equation}\label{eq:(3.7)}
\Delta_{i_1}(a_{i_1},b_{i_1})\cdots \Delta_{i_k}(a_{i_k},b_{i_k})F(s_1,\ldots,s_n)\ge 0.
\end{equation}
The volume condition of the form (\ref{eq:(3.7)}) follows from the following. Let $B= B_1\times \cdots\times B_n$, where $B_i= [a_i,b_i)$ if $i\in \{i_1,\ldots,i_k\}$ and $B_i=(-\infty,s_i)$ if $i \in \{1,\ldots,n\}\setminus \{i_1,\ldots,i_k\}$. Then (\ref{eq:(3.7)}) denotes $\Delta_{i_1}(a_{i_1},b_{i_1})\cdots \Delta_{i_k}(a_{i_k},b_{i_k})F(s_1,\ldots,s_n)= x(B)\ge 0$.

If $A = [a_1,b_1)\times\cdots\times [a_n,b_n)$ with reals $a_i\le b_i$ for each $i=1,\ldots,n$, and $F$ is an $n$-dimensional spectral resolution, we define
$$
V(F,A):=\Delta_1(a_1,b_1)\cdots\Delta_n(a_n,b_n)F.
$$

As we can see, (ii) of Definition \ref{de:3.1} of an $n$-dimensional observable on $M$ deals with summable elements $(x(A_i))_i$ whenever $(A_i)_i$ is a disjoint sequence of Borel subsets of $\mathbb R^n$. Therefore, this definition is applicable also for a definition of $n$-dimensional observables on lexicographic effect algebras of the form $E=\Gamma_{ea}(H\lex G,(u,0))$, where $(H,u)$ is a unital po-group with interpolation  and $G$ is a directed Dedekind $\sigma$-complete po-group with interpolation. We note that due to \cite[Cor 2.12]{Goo}, the po-group $H \lex G$ is with interpolation. Similarly, we can define an $n$-dimensional spectral resolution on $E=\Gamma_{ea}(H\lex G,(u,0))$.

Now, we exhibit basic properties of different kinds of observables on different types of MV-algebras. The same properties will hold also for lexicographic effect algebras of the form $E=\Gamma_{ea}(H\lex G,(u,0))$.

\begin{proposition}\label{pr:3.3}
Let $x$ be a two-dimensional observable defined on a $\Rad$-Dedekind $\sigma$-complete perfect MV-algebra $M$ and let $F(s,t)=x((-\infty,s)\times (-\infty,t))$, $s,t \in \mathbb R$. Then there is a point $(s_0,t_0)\in \mathbb R$ such that $(s_0,+\infty)\times (t_0,+\infty)=\{(s,t)\in \mathbb R\colon F(s,t)\in \Rad(M)'\}$. Moreover, if $t>t_0$, then $\bigwedge_{s\searrow s_0}(F(s,t)-F(s_0,t))\in \Rad(M)'$, $\bigwedge_{s\searrow s_0} F(s,t)\in \Rad(M)'$ and if $s>s_0$, then $\bigwedge_{t\searrow t_0}(F(s,t)-F(s,t_0))\in \Rad(M)'$, $\bigwedge_{t\searrow t_0}F(s,t)\in \Rad(M)'$. In addition, the element
$$
a=\bigwedge\{F(s,t)\colon F(s,t)\in \Rad(M)'\}
$$
exists in $M$ and it belongs to $\Rad(M)'$ and
$$
x(\{(s_0,t_0)\})\in \Rad(M)'.
$$
\end{proposition}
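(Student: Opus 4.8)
The plan is to exploit the structural isomorphism $M\cong\Gamma(\mathbb Z\lex G,(1,0))$ available for a $\Rad$-Dedekind $\sigma$-complete perfect MV-algebra, so that $M=M_0\cup M_1$ with $M_0=\Rad(M)$ (the elements of first coordinate $0$) and $M_1=\Rad(M)'$ (first coordinate $1$). Writing $\phi(s,t)\in\{0,1\}$ for the first coordinate of $F(s,t)$, monotonicity (\ref{eq:(3.1)}) of $F$ makes $\phi$ a monotone $\{0,1\}$-valued function, so $U:=\{(s,t)\colon F(s,t)\in\Rad(M)'\}=\phi^{-1}(1)$ is an up-set of $\mathbb R^2$. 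The first assertion is thus the claim that $U$ is an open upper quadrant $(s_0,\infty)\times(t_0,\infty)$.

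First I would show that $U$ is a Cartesian product of two up-sets of $\mathbb R$, and this is the step I expect to be the main obstacle. The key is the volume condition: since $\Delta_1(a_1,b_1)\Delta_2(a_2,b_2)F=x(A)\in M$ for the rectangle $A=[a_1,b_1)\times[a_2,b_2)$, its first coordinate $\Delta_1(a_1,b_1)\Delta_2(a_2,b_2)\phi$ lies in $\{0,1\}$, in particular is $\ge 0$. If $U$ were not a product, one finds points $(s_1,t_1),(s_2,t_2)\in U$ with $(s_1,t_2)\notin U$; because $U$ is an up-set this forces $s_1<s_2$ and $t_2<t_1$, and then $(s_2,t_1)\in U$ as well. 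Evaluating the volume over $[s_1,s_2)\times[t_2,t_1)$ gives $\phi(s_2,t_1)-\phi(s_1,t_1)-\phi(s_2,t_2)+\phi(s_1,t_2)=1-1-1+0=-1<0$, a contradiction. Hence $U=A\times B$ for up-sets $A,B\subseteq\mathbb R$. Next, left-continuity (\ref{eq:(3.3)}) together with Lemma \ref{le:2.1}(1) shows that whenever $F(s,t)\in M_1$ there is $(s',t')\ll(s,t)$ with $F(s',t')\in M_1$, so neither $A$ nor $B$ has a least element and each is open; property (\ref{eq:(3.4)}) with Lemma \ref{le:2.1}(1) rules out $A=\mathbb R$ or $B=\mathbb R$ (an all-$M_1$ column would force the infimum $\bigwedge_sF(s,t)=0\in M_0$ to have a term in $M_0$); and (\ref{eq:(3.2)}) makes $U$ nonempty. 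Thus $A=(s_0,\infty)$, $B=(t_0,\infty)$ with $s_0,t_0\in\mathbb R$.

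For the remaining assertions I would pass from suprema and infima of $F$ to values of $x$ on decreasing intersections of Borel sets, using continuity from above (item (iii) of the properties of observables) to guarantee existence, and then read off the first coordinate. For $t>t_0$ the identity $F(s,t)-F(s_0,t)=x([s_0,s)\times(-\infty,t))$ gives $\bigwedge_{s\searrow s_0}(F(s,t)-F(s_0,t))=x(\{s_0\}\times(-\infty,t))$, and likewise $\bigwedge_{s\searrow s_0}F(s,t)=x((-\infty,s_0]\times(-\infty,t))$; since each approximating value lies in $M_1$ (its first coordinate is $1$ as $(s,t)\in U$ while $(s_0,t)\notin U$), Lemma \ref{le:2.1}(1) forces the infimum into $M_1=\Rad(M)'$. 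The symmetric statements for $s>s_0$, $t\searrow t_0$ are identical with the coordinates exchanged. For $a=\bigwedge\{F(s,t)\colon F(s,t)\in\Rad(M)'\}$ I would take a cofinal diagonal sequence $(s_n,t_n)\searrow(s_0,t_0)$ with $s_n>s_0$, $t_n>t_0$; by monotonicity this infimum equals $\bigwedge_nF(s_n,t_n)=x((-\infty,s_0]\times(-\infty,t_0])$, all $F(s_n,t_n)\in M_1$, so again $a\in\Rad(M)'$. Finally $\{(s_0,t_0)\}=\bigcap_{s>s_0,t>t_0}[s_0,s)\times[t_0,t)$ yields $x(\{(s_0,t_0)\})=\bigwedge x([s_0,s)\times[t_0,t))$, and each volume $x([s_0,s)\times[t_0,t))=F(s,t)-F(s_0,t)-F(s,t_0)+F(s_0,t_0)$ has first coordinate $1-0-0+0=1$, hence lies in $M_1$; a last application of Lemma \ref{le:2.1}(1) places $x(\{(s_0,t_0)\})$ in $\Rad(M)'$.
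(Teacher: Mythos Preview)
Your argument is correct and follows essentially the same approach as the paper. Both proofs hinge on the volume condition to show that the set $\{(s,t)\colon F(s,t)\in\Rad(M)'\}$ is an open rectangle: the paper does this coordinate-wise by proving that $t_s:=\inf\{t\colon F(s,t)\in\Rad(M)'\}$ is independent of $s$ via the inequality $F(s_2,t_2)-F(s_1,t_2)\ge F(s_2,t_1)-F(s_1,t_1)$, while you obtain the same contradiction more compactly through the first-coordinate map $\phi$ and the ``swap closure'' characterization of product up-sets. For the remaining infimum statements you are, if anything, more explicit than the paper, invoking continuity from above of $x$ and Lemma~\ref{le:2.1}(1) where the paper merely records formula (\ref{eq:x(s,t)}) and asserts the conclusion.
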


\begin{proof}
Let $x$ be a two-dimensional observable on a perfect MV-algebra and let $F(s,t)=x((-\infty,s)\times(-\infty,t))$, $s,t\in \mathbb R$ be given. Denote by $T_1=\{(s,t)\in \mathbb R^2 \colon F(s,t) \in \Rad(M)'\}$; then $T_1\ne \emptyset$. Take $s \in \mathbb R$ such that $F(s,t)\in \Rad(M)'$ for some $t \in \mathbb R$ and let $T^s_1=\{t \in \mathbb R\colon F(s,t)\in \Rad(M)'\}$. Since $\bigwedge_t F(s,t)=0$ for each $s\in \mathbb R$, there is $t'\in \mathbb R$ such that $F(s,t')\in \Rad(M)$. Then, for each $t\in T^s_1$, we have $t'\le t$ (otherwise $t_0\le t'$ for some $t_0\in \mathbb R$ with $F(s,t_0)\in \Rad(M)'$, so that $F(s,t_0)\le F(s,t')\in \Rad(M)$, a contradiction). Then $t_s=\inf\{t\in T^s_1\}> -\infty $. According to Lemma \ref{le:2.1}, $\bigvee_{t<t_s}F(s,t)=F(s,t_s)\in \Rad(M)$, yielding $T^s_1=(t_s,+\infty)$.

We have to underline a simple but important note: If $F(s,t)\in \Rad(M)'$, then $F(s,t'), F(s',t)\in \Rad(M)'$ for all $s'>s$ and all $t'>t$.

Now, let $s_1<s_2$ be real numbers such that $F(s_1,t_1),F(s_2,t_2)\in \Rad(M)'$ for some $t_1,t_2\in \mathbb R$. Let $t'\in (t_{s_1},+\infty)$. Then $\Rad(M)'\ni F(s_1,t')\le F(s_2,t')$, so that $F(s_2,t')\in \Rad(M)'$, and $t' \in (t_{s_2},+\infty)$, $(t_{s_1},+\infty) \subseteq (t_{s_2},+\infty)$, and $t_{s_2}\le t_{s_1}$.

We assert that if $s_1<s_2$ are real numbers such that $F(s_1,t),F(s_2,t)\in \Rad(M)'$ for some $t\in \mathbb R$, then $t_{s_1}=t_{s_2}$.  Assume the converse, then $t_{s_2}<t_{s_1}$.
Then there are $t_1<t_2\in \mathbb R$ with $t_{s_2}<t_1<t_{s_1}<t_2$ such that $F(s_1,t_1)\in \Rad(M)$ and $F(s_1,t_2), F(s_2,t_1), F(s_2,t_2) \in \Rad(M)'$. Using the volume condition in the form $F(s_2,t_2)-F(s_1,t_2)\ge F(s_2,t_1)-F(s_1,t_1)$, we see that the left-hand side of the inequality is from $\Rad(M)$ whereas the right-hand one is from $\Rad(M)'$ which is a contradiction. Hence, $t_0=t_s$ for each $s\in \mathbb R$ with $F(s,t')\in \Rad(M)'$ for some $t'\in \mathbb R$ and $T^s_1=(t_0,+\infty)$.

In the same way we proceed with the second coordinate. For each $t\in \mathbb R$ such that $F(s,t)\in \Rad(M)'$ for some $s \in \mathbb R$, we define $S^t_1=\{s \in \mathbb R\colon F(s,t)\in \Rad(M)'\}$. As in the above, there is $s_t \in \mathbb R$ such that $s_t=\inf \{s \in \mathbb R\colon F(s,t)\in \Rad(M)'\}$, $\bigvee_{s<s_t} F(s,t)=F(s_t,t)\in \Rad(M)$, so that $S^t_1=(t_s,+\infty)$. Moreover, if $t_1<t_2$ and $F(s_1,t_1),F(s_2,t_2)\in \Rad(M)'$, then $s_{t_2}\le s_{t_1}$. Analogously as for $t_s$, it is possible to show that $s_{t_2}= s_{t_1}$. Whence, $s_0=s_t$ for each $t\in \mathbb R$ with $F(s',t)\in \Rad(M)'$ for some $s'\in \mathbb R$, and $S^t_1=(s_0,+\infty)$.

Taking into account that all points $t_s$ and $s_t$ lay on the half lines starting from $t_0$ and $s_0$, respectively, we conclude that $T_1 =\{(s,t)\in \mathbb R\colon F(s,t)\in \Rad(M)'\}= (s_0,+\infty)\times (t_0,+\infty)$. If we denote by $T_0=\{(s,t)\in \mathbb R^2\colon F(s,t) \in \Rad(M)\}$, then $T_0=\mathbb R^2\setminus T_1$.

We note that if $s \in \mathbb R$ is such $F(s,t')\in M_0$ for some $t'\in \mathbb R$, let $T_0^s=\{t\in \mathbb R\colon F(s,t)\in \Rad(M)\}$. Then either $T_0^s = (-\infty,t_s]$ or $T_0^s = (-\infty,+\infty)$. If we define dually $S_0^t=\{s\in \mathbb R\colon F(s,t)\in \Rad(M)\}$, then either $S_0^t=(-\infty,s_t]$ or $S_0=(-\infty,+\infty)$.

Let $(s',t')\in \mathbb R^2$ be given. Since $\{(s',t')\}=\bigcap_{(s,t)\gg(s',t')} \Big(\big((-\infty,s)\times (-\infty,t)\setminus (-\infty,s')\times (-\infty,t)\big)\setminus\big((-\infty,s)\times (-\infty,t')\setminus(-\infty,s')\times (-\infty,t')\big)\Big)$, we have

\begin{equation}\label{eq:x(s,t)}
x(\{s',t'\})=\bigwedge_{(s,t)\gg (s',t')} \Delta_1(s',s)\Delta_2(t',t) F(s,t).
\end{equation}

Now, take $(s_0,t_0)\in \mathbb R^2$. Using (\ref{eq:x(s,t)}), we have $x(\{s_0,t_0\})=\bigwedge_{(s,t)\gg (s_0,t_0)}\big((F(s,t)-F(s_0,t))-(F(s,t_0)-F(s_0,t_0))\big)\in \Rad(M)'$.
\end{proof}

The point $(s_0,t_0)$ is said to be a {\it characteristic point} of $F$.

\begin{proposition}\label{pr:3.4}
Let $x$ be an $n$-dimensional observable on a $\Rad$-Dedekind $\sigma$-complete perfect MV-algebra $M$ and let $F(t_1,\ldots,t_n)=x((-\infty,t_1)\times \cdots\times (-\infty,t_n))$, $t_1,\ldots,t_n\in \mathbb R$. Then there is a point $(t^0_1,\ldots,t^0_n)\in \mathbb R^n$, called a characteristic point of $F$, such that $(t^0_1,+\infty)\times \cdots\times (t^0_n,+\infty)=\{(t_1,\ldots,t_n)\in \mathbb R^n\colon F(t_1,\ldots,t_n)\in \Rad(M)'\}$. Moreover, for each $i=1,\ldots,n$ and each $t_i>t^0_i$, the element
$$
\bigwedge_{t_i\searrow t^0_i} (F(t_1,\ldots,t_i,\ldots,t_n)-F(t_1,\ldots,t^0_i,\ldots,t_n))
$$
exists in $M$ and belongs to $\Rad(M)'$, the element

\begin{equation}\label{eq:Rad}
\bigwedge_{t_i\searrow t^0_i} F(t_1,\ldots,t_i,\ldots,t_n)
\end{equation}
exists in $M$ and belongs to $\Rad(M)'$,
the element
\begin{equation}\label{eq:Radn}
a=\bigwedge\{F(t_1,\ldots,t_n)\colon F(t_1,\ldots,t_n)\in \Rad(M)'\}
\end{equation}
exists also in $M$ and it belongs to $\Rad(M)'$, and
$$
x(\{(t^0_1,\ldots,t^0_n)\})=\bigwedge_{(t_1,\ldots,t_n)\gg (t^0_1,\ldots,t^0_n)}\Delta_1(t^0_1,t_1)\cdots\Delta_n(t^0_n,t_n) F(s_1,\ldots,s_n)\in \Rad(M)'.
$$
\end{proposition}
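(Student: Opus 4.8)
The plan is to first pin down the shape of the co-radical region $T_1=\{\mathbf{t}\in\mathbb{R}^n\colon F(\mathbf{t})\in\Rad(M)'\}$ and only afterwards read off all the infima from it. Writing $M=\Gamma(\mathbb Z\lex G,(1,0))$, perfectness means $M=M_0\cup M_1$ with $M_0=\Rad(M)$ and $M_1=\Rad(M)'$, and crucially every element of $M_0$ is strictly below every element of $M_1$ (their first coordinates are $0$ and $1$). Since $F$ is monotone, this makes $T_1$ an up-set: if $F(\mathbf{t})\in M_1$ and $\mathbf{s}\ge\mathbf{t}$ then $F(\mathbf{s})\ge F(\mathbf{t})\in M_1$, so $F(\mathbf{s})\in M_1$. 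For each coordinate $i$, fixing the remaining coordinates $\mathbf{t}_{-i}$, I would study the fiber $\{t_i\colon F(\mathbf t)\in M_1\}$, an up-set of $\mathbb R$. Equation \eqref{eq:(3.4)} together with Lemma \ref{le:2.1}(1) forces it to be bounded below (an infimum landing in $M_0$, where $0$ lives, would require a member in $M_0$, yet all members are in $M_1$), so its infimum $\theta_i(\mathbf t_{-i})$ is finite whenever the fiber is non-empty. Using continuity from below of the observable in the $i$-th variable (property (iii)), $\bigvee_{t_i<\theta_i}F(\mathbf t)=F(\mathbf t)\big|_{t_i=\theta_i}$ is a supremum of elements of $M_0$, hence lies in $M_0$ by Lemma \ref{le:2.1}(1); thus the fiber is exactly the open half-line $(\theta_i(\mathbf t_{-i}),+\infty)$.

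The heart of the argument, and the step I expect to be the main obstacle, is to show that $\theta_i(\mathbf t_{-i})$ does not depend on $\mathbf t_{-i}$. I would first note that non-emptiness of fibers is monotone in $\mathbf t_{-i}$ (raising the other coordinates only increases $F$), so all fibers above a non-empty one are non-empty. For two values of $\mathbf t_{-i}$ differing in a single coordinate $j$, I would reproduce verbatim the two-dimensional contradiction from the proof of Proposition \ref{pr:3.3}: were the two thresholds distinct, one could choose a rectangle in the $(i,j)$-plane (all other coordinates frozen) whose four corners give three values in $M_1$ and one in $M_0$, so that the nonnegative mixed difference $\Delta_i\Delta_j F\ge 0$ of \eqref{eq:(3.7)} reads as an element of $M_0$ dominating an element of $M_1$, contradicting $M_0<M_1$. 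To pass from single-coordinate changes to arbitrary $\mathbf t_{-i}$, I would route any two parameter values through their coordinatewise maximum, changing one coordinate at a time; monotone non-emptiness keeps every intermediate fiber non-empty, so the single-coordinate result chains to a common value $t^0_i$. Consequently $T_1=(t^0_1,+\infty)\times\cdots\times(t^0_n,+\infty)$: the inclusion ``$\subseteq$'' is immediate since any $\mathbf t\in T_1$ lies on a non-empty direction-$i$ fiber with threshold $t^0_i$, and for ``$\supseteq$'' I would start from any point of $T_1$ and walk down to $\mathbf t$ one coordinate at a time, each step staying in $T_1$ because the relevant fiber is $(t^0_i,+\infty)$. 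Write $\mathbf t^0=(t^0_1,\ldots,t^0_n)$.

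With the box description in hand, all the remaining assertions follow from one uniform observation. Each displayed element is the infimum of a decreasing sequence obtained by letting the relevant coordinates shrink to their thresholds; by continuity from above of $x$ (property (iii)) every such infimum exists and equals the value of $x$ on the limiting intersection set, so existence is automatic. It then suffices to check that each sequence lies entirely in $M_1$, for then Lemma \ref{le:2.1}(1) keeps its infimum in $M_1=\Rad(M)'$ (an infimum in $M_0$ would require a member in $M_0$). For $\bigwedge_{t_i\searrow t^0_i}F(t_1,\ldots,t_i,\ldots,t_n)$ with the remaining coordinates fixed above their thresholds, every term lies in $T_1$, hence in $M_1$. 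For $\bigwedge_{t_i\searrow t^0_i}\big(F(\ldots,t_i,\ldots)-F(\ldots,t^0_i,\ldots)\big)$ the term equals $x\big(\prod_{j\ne i}(-\infty,t_j)\times[t^0_i,t_i)\big)$, a difference of a value in $M_1$ (a point above all thresholds) and a value in $M_0$ (its $i$-th coordinate equals $t^0_i$, so it is not in $T_1$), whence its first coordinate is $1$ and it lies in $M_1$. The same reasoning applies to the element $a$, an infimum of the values $F(\mathbf t)$, $\mathbf t\in T_1$, all in $M_1$, realized along $\mathbf t\searrow\mathbf t^0$.

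Finally, for $x(\{\mathbf t^0\})$ I would expand the mixed difference $V=\Delta_1(t^0_1,t_1)\cdots\Delta_n(t^0_n,t_n)F$ by inclusion--exclusion over the $2^n$ corners $\mathbf v^{\mathbf c}$, $\mathbf c\in\{0,1\}^n$, whose $i$-th coordinate is $t_i>t^0_i$ if $c_i=1$ and $t^0_i$ if $c_i=0$, with sign $(-1)^{n-|\mathbf c|}$. A corner lies in $T_1$ exactly when all its coordinates strictly exceed the thresholds, i.e. only for $\mathbf c=(1,\ldots,1)$, whose coefficient is $+1$; every other corner contributes a value in $M_0$. Reading first coordinates, $V$ has first coordinate $1$, so $V\in M_1$ for all $\mathbf t\gg\mathbf t^0$. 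Since $\{\mathbf t^0\}=\bigcap_{\mathbf t\gg\mathbf t^0}\prod_i[t^0_i,t_i)$, property (iii) gives $x(\{\mathbf t^0\})=\bigwedge_{\mathbf t\gg\mathbf t^0}V$, and realizing this along $t_i=t^0_i+1/k$ yields a decreasing sequence in $M_1$; Lemma \ref{le:2.1}(1) then places the infimum in $M_1=\Rad(M)'$, as claimed.
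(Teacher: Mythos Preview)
Your argument is correct and follows essentially the same route as the paper: identify the direction-$i$ fibers of $T_1$ as open half-lines via left continuity and Lemma~\ref{le:2.1}(1), then use the two-variable volume inequality \eqref{eq:(3.7)} to force the thresholds to be coordinate-independent, yielding the box description of $T_1$; the remaining infima are handled by continuity of $x$ from above together with the contrapositive of Lemma~\ref{le:2.1}(1). Your routing through the coordinatewise maximum and the explicit inclusion--exclusion reading of $\Delta_1\cdots\Delta_n F$ at $\mathbf t^0$ spell out steps the paper leaves to the reader (the paper simply refers back to Proposition~\ref{pr:3.3}), but the underlying ideas are identical.
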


\begin{proof}
If $n=1$, the statement follows from \cite[Prop 4.5]{DDL}, if $n$=2, we have Proposition \ref{pr:3.3}. Thus let $n>2$.
Denote by $T_1=\{(t_1,\ldots,t_n)\in \mathbb R^n\colon F(t_1,\ldots,t_n)\in \Rad(M)'\}$ and let $j\in \{1,\ldots,n\}$ be fixed. Without loss of generality we will assume that $j=1$. Let $t_2,\ldots,t_n \in \mathbb R$ be fixed real numbers such that $F(t,t_2,\ldots,t_n)\in \Rad(M)'$ for some $t \in \mathbb R$. As in the proof of Proposition \ref{pr:3.3}, we can show that there is $t^0_1= t^0_1(t_2,\ldots,t_n)=\inf\{t \in \mathbb R\colon F(t,t_2,\ldots,t_n)\in \Rad(M)'\}$. In addition, due to \cite[Lem 2.2]{DvLa}, we have $\bigvee_{t_1<t^0_1}F(t_1,\ldots,t_n)=F(t^0_1,t_2,\ldots,t_n)\in \Rad(M)$. Therefore, $(t^0_1,+\infty)
=\{t\in \mathbb R\colon F(t,t_2,\ldots,t_n)\in \Rad(M)'\}$.

Let $i\in \{2,\ldots,n\}$. It is evident that if $t'_i<t''_i$, then $t^0_1(t_2,\ldots,t''_i,\ldots,t_n)\le  t^0_1(t_2,\ldots,t'_i,\ldots,t_n)$. We assert that
\begin{equation}\label{eq:t}
t^0_1(t_2,\ldots,t''_i,\ldots,t_n)= t^0_1(t_2,\ldots,t'_i,\ldots,t_n)
\end{equation}
whenever $t'_i,t''_i>t^0_i$.
Indeed, if not, there are real numbers $t_1'<t_1''$ with $t^0_1(t_2,\ldots,t''_i,\ldots,t_n)< t'_i< t^0_1(t_2,\ldots,t'_i,\ldots,t_n) <t''_i$ such that
\begin{align*}
&F(t'_1,t_2,\ldots,t_i',\ldots,t_n)\in \Rad(M)\\
\end{align*}
and
\begin{align*}
F(t''_1,t_2,\ldots,t_i',\ldots,t_n), F(t'_1,t_2,\ldots,t_i'',\ldots,t_n), F(t''_1,t_2,\ldots,t_i'',\ldots,t_n)\in \Rad(M)'.
\end{align*}
The volume condition is reducing to
$$
F(t''_1,t_2,\ldots,t_i'',\ldots,t_n)- F(t'_1,t_2,\ldots,t_i'',\ldots,t_n)\ge F(t''_1,t_2,\ldots,t_i',\ldots,t_n)- F(t'_1,t_2,\ldots,t_i',\ldots,t_n).$$
The left-hand side of this volume condition belongs to $\Rad(M)$ but the right-hand side is from $\Rad(M)'$ which is a contradiction. So that (\ref{eq:t}) holds.  Since (\ref{eq:t}) holds for each $i=2,\ldots,n$, we have for all $(t_2',\ldots,t'_n),(t_2'',\ldots,t''_n)\in \mathbb R^{n-1}$
$$
t^0_1:=t^0_1(t_2',\ldots,t'_i,\ldots,t'_n)= t^0_1(t_2'',\ldots,t''_i,\ldots,t''_n).
$$
The above reasoning holds for each $j\in \{1,\ldots,n\}$, so that there is
$t^0_j=t^0_j(t_1,\ldots,t_{j-1},t_{j+1},\ldots,t_n)$ such that we have
$T_1=(t^0_1,+\infty)\times \cdots\times (t^0_n,+\infty)$.

The properties (\ref{eq:Rad})--(\ref{eq:Radn}) and the last one follow the same reasonings as those in the proof of the analogous statement in Proposition \ref{pr:3.3} and using the equality
$$
x(\{(t^0_1,\ldots,t^0_n)\})= \bigwedge_{(t_1,\ldots,t_n)\gg (t^0_1,\ldots,t^0_n)} \Delta_1(t^0_1,t_1)\cdots\Delta_n(t^0_n,t_n)F(s_1,\ldots,s_n).
$$
\end{proof}

\section{Characteristic Points}\label{sec:4}%4

In the section, we will study characteristic points of observables/spectral resolutions in the case of $k$-perfect MV-algebras where $k>1$. We show that in this case the situation is more complicated than for perfect MV-algebras because it can happen that some $T_i=\{(s_1,\ldots,s_n)\in \mathbb R^n \colon F(s_1,\ldots,s_n)\in M_i\}$ has more characteristic points.

Let $x$ be an $n$-dimensional observable on a $\Rad$-Dedekind $\sigma$-complete $k$-perfect MV-algebra $M$ and let $F$ be an $n$-dimensional spectral resolution on $M$.
%let $F(s_1,\ldots,s_n)=x((-\infty,s_1)\times \cdots\times (-\infty,s_n))$, $(s_1,\ldots,s_n)\in \mathbb R^n$.
We define $T_i=\{(s_1,
\ldots,s_n)\in \mathbb R^n \colon F(s_1,\ldots,s_n)\in M_i\}$ for $i=0,1,\ldots,k$. Let $i=1,\ldots,k$ be fixed and given $\mathbf s=(s_1,\ldots,s_n)\in T_i$, we define
$$
s^i_j= \inf\{s\in \mathbb R\colon (s_1,\ldots,s_{j-1},s,s_{j+1},\ldots,s_n)\in T_i\}
$$
for each $j=1,\ldots,n$. We write also $s^i_j=\pi^i_j(\mathbf s)$ as the $j$-th projection of $\mathbf s$ in $T_i$.
Then the point $(s_1,\ldots,s_{j-1}, s^i_j,s_{j+1},\ldots,s_n)\in T_l$ for some $l=0,1,\ldots,i-1$.
The point $\pi_i(\mathbf s):=(s^i_1,\ldots,s^i_n)$ is said to be a {\it characteristic point associated} to $\mathbf s \in T_i$, or simply a {\it characteristic point} of $T_i$, and we have $\pi_i(\mathbf s)\in T_l$ for unique $l=0,1,\ldots,i-1$. Sometimes we say also that it is a characteristic point of $F$ not emphasizing $T_i$.
We note that it can happen that some $T_i$ has more characteristic points, see Figure 1. However, using the proof of Proposition \ref{pr:3.4}, we have that $T_k$ has always a unique characteristic point. Trivially, if $n=1$, then every non-empty $T_i$ has a unique characteristic point. The aim of the section is to show when non-empty $T_i$ has finitely many characteristic points. The complete situation is described for $n=2$. We start with a useful definition and a simple observation.

We define a relation $\approx_i$ on $T_i\ne \emptyset$, $i>0$, as follows: Let $\mathbf s=(s_1,\ldots,s_n), \mathbf t=(t_1,\ldots,t_n)\in T_i$. We write
$$
\mathbf s\approx_i \mathbf t \quad \Leftrightarrow \quad \pi_i(\mathbf s)=\pi_i(\mathbf t).
$$
Then clearly $\approx_i$ is an equivalence and let $\mathbf s/\approx_i=\{\mathbf t \in T_i \colon \mathbf t \approx_i \mathbf s\}$, $\mathbf s \in T_i$. The set $\mathbf s/\approx_i$ is said to be a {\it block} in $T_i$, or simply a block. %, and we say also that a unique characteristic point $\mathbf s$ which corresponds to the block, i.e. $\mathbf s = \pi_i(\mathbf t)$, where $\mathbf t \in T_i$, is also a characteristic point of a block.
If $\mathbf s_1,\mathbf s_2 \in \mathbf s/\approx_i$, we have $\pi_i(\mathbf s_1)=\pi_i(\mathbf s_2)$, so that each block has a unique characteristic point.

\begin{lemma}\label{le:approx1}
Let $x$ be an $n$-dimensional observable and $F=F_x$.
Let $i$ be the least non-zero integer such that $T_i$ is non-empty. Then $T_i$ has only finitely many characteristic points.
\end{lemma}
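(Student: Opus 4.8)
The plan is to reduce the statement to the finiteness of the atoms of an auxiliary integer-valued measure attached to $F$. Write $M=\Gamma(\mathbb Z\lex G,(k,0))$ and let $\phi\colon \mathbb Z\lex G\to\mathbb Z$ be the first-coordinate projection; it is a group homomorphism, $\phi(a)\le\phi(b)$ whenever $a\le b$, $\phi(1)=k$, and $M_j=\{a\colon \phi(a)=j\}$. First I would introduce the \emph{layer measure} $\mu(A):=\phi(x(A))$, $A\in\mathcal B(\mathbb R^n)$. Using Definition \ref{de:3.1}(ii) together with Lemma \ref{le:2.1}(3) — which guarantees that in any summable family all but finitely many members lie in $M_0$ — one checks that $\mu$ is a genuine $\sigma$-additive non-negative integer-valued measure with $\mu(\mathbb R^n)=\phi(1)=k$, and that its marginal distribution function is exactly the layer of $F$, namely $\mu((-\infty,t_1)\times\cdots\times(-\infty,t_n))=\phi(F(t_1,\dots,t_n))$. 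In particular $T_j=\{\mathbf t\colon \phi(F(\mathbf t))=j\}$.

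Second, since $\mu$ is a finite measure taking only non-negative integer values, it is purely atomic with at most $k$ atoms: each point of positive mass carries mass $\ge 1$, so there are at most $\mu(\mathbb R^n)=k$ of them, while a nonzero non-atomic part would, by the intermediate-value property of non-atomic measures, produce a set of non-integer measure. Thus $\mu=\sum_{r=1}^{N}n_r\delta_{\mathbf a_r}$ with $n_r\ge 1$ and $N\le k$, whence $\phi(F(\mathbf t))=\sum_{r\colon \mathbf a_r\ll\mathbf t}n_r$.

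The key step — and the place where the hypothesis that $i$ is the least non-zero index enters — is to show that every characteristic point of $T_i$ is one of the finitely many atoms $\mathbf a_r$. Fix $\mathbf s=(s_1,\dots,s_n)\in T_i$ and put $\mathbf c=\pi_i(\mathbf s)$. Because $T_1,\dots,T_{i-1}=\emptyset$, the layer $\phi\circ F$ takes values in $\{0\}\cup\{i,i+1,\dots,k\}$; hence for each $l$ the one-variable function $\tau\mapsto \phi(F(s_1,\dots,s_{l-1},\tau,s_{l+1},\dots,s_n))$ is non-decreasing, bounded above by $\phi(F(\mathbf s))=i$, and omits $1,\dots,i-1$, so it is $\{0,i\}$-valued and jumps once, from $0$ to $i$, precisely at $\tau=s^i_l=c_l$ (left-continuity of the marginal distribution function places the value $0$ at $\tau=c_l$). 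Reading this jump off the atomic representation forces every atom $\mathbf a_r$ with $\mathbf a_r\ll\mathbf s$ to satisfy $a_{r,l}=c_l$: none can have $l$-th coordinate below $c_l$, for then the function would already be positive there, and none can have it in $(c_l,s_l)$, for then a new atom would enter on the constant stretch $(c_l,s_l]$. Since this holds for every $l=1,\dots,n$ and at least one atom lies $\ll\mathbf s$ (the mass below $\mathbf s$ equals $i>0$), all those atoms collapse to the single point $\mathbf c=(c_1,\dots,c_n)$, which is therefore an atom of $\mu$ of mass $i$.

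Consequently each characteristic point of $T_i$ lies in the finite set $\{\mathbf a_1,\dots,\mathbf a_N\}$, so $T_i$ has at most $N\le k$ (indeed at most $k/i$) characteristic points, which is the assertion. The main obstacle is exactly this key step, the identification of each characteristic point with a single atom; it is where the least-index hypothesis is indispensable, since it rules out intermediate layer values and so forces the marginal jumps to be complete. For a general $T_i$ the marginals may climb through intermediate plateaus, the atoms below $\mathbf s$ need no longer coincide, and a characteristic point need not be an atom — which is why unrestricted finiteness is proved only later and only for $n=2$. A secondary technical point to treat with care is the countable additivity of $\mu$, which rests on Lemma \ref{le:2.1}(3).
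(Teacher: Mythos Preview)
Your argument is correct. The layer measure $\mu=\phi\circ x$ is indeed a finite non-negative integer-valued Borel measure (Lemma \ref{le:2.1}(3) does the work for $\sigma$-additivity), hence purely atomic with at most $k$ atoms, and your marginal analysis cleanly pins each characteristic point of the minimal non-empty $T_i$ to a single atom of mass $i$.

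The paper proceeds quite differently and more geometrically. For each characteristic point $\mathbf t=\pi_i(\mathbf s)$ it forms the box $A(\mathbf t,\mathbf s)=\prod_l[t_l,s_l)$; the least-index hypothesis forces every vertex except $\mathbf s$ into $T_0$, so the inclusion--exclusion sum giving $x(A(\mathbf t,\mathbf s))=V(F,A(\mathbf t,\mathbf s))$ lands in $M_i$. It then asserts that boxes coming from distinct characteristic points are disjoint, whence the corresponding values $x(A(\mathbf t_j,\mathbf s_j))\in M_i$ are summable and only finitely many of them can avoid $M_0$. Your route trades this geometry for standard measure-theoretic machinery (atomic decomposition plus the intermediate-value property of non-atomic measures). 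What you gain is a fully self-contained justification: the disjointness of the boxes in the paper's argument is stated without proof, and verifying it in general actually requires something like your marginal analysis (or the content of Lemma \ref{le:approx2}, which appears only afterwards). What the paper's approach gains is brevity and the avoidance of outside tools such as Sierpi\'nski's theorem. Both yield the same quantitative bound $\lfloor k/i\rfloor$, and your observation that the auxiliary measure $\mu$ encodes the layer structure globally is a genuinely useful viewpoint that could streamline several later arguments in Section~\ref{sec:4} as well.
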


\begin{proof}
Let $\mathbf s=(s_1,\ldots,s_n)\in T_i$ be given and let $\mathbf t=(t_1,\ldots,t_n)$ be a characteristic point associated to $\mathbf s\in T_i$. Define a semi-closed rectangle $A(\mathbf t,\mathbf s)=[ t_1,s_1)\times \cdots \times [ t_n,s_n)$. Then every vertex of $A(\mathbf t,\mathbf s)$ besides of $\mathbf s\in T_i$ is from $T_0$. Hence, $x(A(\mathbf t,\mathbf s))=\Delta_1(t_1,s_1)\cdots\Delta_n(t_n,s_n)F \in T_i$. We note that if $\mathbf t_1$ and $\mathbf t_2$ are two different characteristic points of $T_i$ corresponding to $\mathbf s_1,\mathbf s_2\in T_i$, then $A(\mathbf t_1,\mathbf s_1)\cap A(\mathbf t_2, \mathbf s_2)=\emptyset$. Because in $M_i$ there are only finitely many mutually orthogonal elements, and $x(A(\mathbf t,\mathbf s))$'s are mutually orthogonal for different characteristic points $\mathbf t$'s, we have that $T_i$ has only finitely many characteristic points.
\end{proof}

\begin{lemma}\label{le:approx2}
Let $x$ be an $n$-dimensional observable and $F=F_x$.
Let $\mathbf t=(t_1,\ldots,t_n)\in \mathbb R^n$ and $\mathbf s\in T_i$ be such that $\pi_i(\mathbf s)\ll \mathbf t \le \mathbf s$. Then $\mathbf t \in T_i$, $\mathbf t \approx_i \mathbf s$ and
\begin{equation}\label{eq:approx1}
\pi_i(\mathbf s)=\bigwedge \{\mathbf t\in \mathbb R^n\colon \pi_i(\mathbf s)\ll \mathbf t \le \mathbf s\}
\end{equation}
and the element
\begin{equation}\label{eq:approx2}
\bigwedge\{F(\mathbf t)\colon \pi_i(\mathbf s)\ll \mathbf t \le \mathbf s\}
\end{equation}
exists in $M$ and belongs to $M_i$.
\end{lemma}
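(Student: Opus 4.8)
The plan is to read everything off the first ($\mathbb{Z}$-)coordinate of $F$. Writing $M=\Gamma(\mathbb Z\lex G,(k,0))$ and $F(\mathbf r)=(\phi(\mathbf r),g(\mathbf r))$, a point $\mathbf r$ lies in $T_i$ exactly when $\phi(\mathbf r)=i$, and $\phi$ is monotone with values in $\{0,1,\dots,k\}$. The key observation is that for any semi-closed box $A$ the value $V(F,A)=x(A)\ge 0$, so the leading coordinate of $V(F,A)$ is non-negative; this single fact drives the proof.

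First I would show $\mathbf t\in T_i$ by induction on the number $d$ of coordinates with $t_j<s_j$. For $d=0$ there is nothing to prove, and for $d=1$ the slice $\{r:F(s_1,\dots,r,\dots,s_n)\in T_i\}$ is, by monotonicity of $\phi$ in one variable, an interval with infimum $s^i_j=\pi^i_j(\mathbf s)$ and containing $s_j$, hence containing $(s^i_j,s_j]\ni t_j$. For the inductive step let $C=\{j:t_j<s_j\}$, $|C|=d\ge 2$. Every vertex of the box $\prod_{j\in C}[t_j,s_j)\times\prod_{j\notin C}(-\infty,s_j)$ other than its bottom vertex $\mathbf t$ has at most $d-1$ lowered coordinates and still satisfies $\pi_i(\mathbf s)\ll(\cdot)\le\mathbf s$, hence lies in $T_i$ by the induction hypothesis, so all of these vertices have leading coordinate $i$. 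Taking the $d$-fold mixed difference, via \eqref{eq:(3.5)} or \eqref{eq:(3.7)}, and using $\sum_{\epsilon}(-1)^{|\epsilon|}=0$, the leading coordinate of the non-negative value $x(\cdot)$ equals $(-1)^{d}(\phi(\mathbf t)-i)\ge 0$; this forces $\phi(\mathbf t)=i$ when $d$ is even, but is vacuous when $d$ is odd. Here lies the one genuinely delicate point, which I expect to be the main obstacle: a single $d$-dimensional volume controls $\phi(\mathbf t)$ only for even $d$. To cover odd $d$ I would instead fix one index $j_0\in C$ at $t_{j_0}$ and take only the $(d-1)$-fold difference over $C\setminus\{j_0\}$ (the box carrying $(-\infty,t_{j_0})$ in slot $j_0$): the exceptional bottom vertex is again exactly $\mathbf t$, the remaining vertices lie in $T_i$ by induction, and the leading coordinate becomes $(-1)^{d-1}(\phi(\mathbf t)-i)\ge 0$, which for odd $d$ yields $\phi(\mathbf t)\ge i$, hence $\phi(\mathbf t)=i$. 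Thus exactly one of the two boxes carries the correct sign, and the induction closes.

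With $\mathbf t\in T_i$ available for every $\mathbf t$ in the box, \eqref{eq:approx1} and $\mathbf t\approx_i\mathbf s$ are routine. The set $\{\mathbf t:\pi_i(\mathbf s)\ll\mathbf t\le\mathbf s\}$ is the product $\prod_j(s^i_j,s_j]$, whose coordinatewise infimum is $(s^i_1,\dots,s^i_n)=\pi_i(\mathbf s)$, which is \eqref{eq:approx1}. For $\mathbf t\approx_i\mathbf s$ I would verify $\pi^i_j(\mathbf t)=s^i_j$ for each $j$: the inequality $\pi^i_j(\mathbf t)\ge s^i_j$ is monotonicity (since $\mathbf t\le\mathbf s$), while for any $r\in(s^i_j,s_j]$ the point obtained from $\mathbf t$ by replacing its $j$-th entry with $r$ still lies in $\prod_j(s^i_j,s_j]$, hence in $T_i$ by the previous step, giving $\pi^i_j(\mathbf t)\le s^i_j$.

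Finally, for \eqref{eq:approx2} I would avoid any direct estimate of the $G$-parts and pass through the observable. Choosing a decreasing sequence $\mathbf t^{(m)}\searrow\pi_i(\mathbf s)$ inside the box, the rectangles $(-\infty,t^{(m)}_1)\times\cdots\times(-\infty,t^{(m)}_n)$ decrease to $(-\infty,s^i_1]\times\cdots\times(-\infty,s^i_n]$, so by the downward continuity of $x$ (property (iii) following Definition~\ref{de:3.1}) the infimum $\bigwedge_m F(\mathbf t^{(m)})$ exists and equals $x\big(\prod_j(-\infty,s^i_j]\big)$; being coinitial in $\{F(\mathbf t):\pi_i(\mathbf s)\ll\mathbf t\le\mathbf s\}$, this sequence computes the infimum in \eqref{eq:approx2} as well. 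All terms $F(\mathbf t^{(m)})$ lie in $M_i$ by the first step, so by Lemma~\ref{le:2.1}(1) an existing infimum of this monotone sequence that belongs to some $M_h$ must have a term in $M_h$; since every term sits in $M_i$, the infimum lies in $M_i$, as required.
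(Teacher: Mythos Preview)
Your argument is correct. The organisation of the inductive step differs from the paper's: instead of your induction on the number $d$ of lowered coordinates with a single $d$-fold (or, for odd $d$, $(d-1)$-fold) mixed difference and an explicit parity analysis, the paper normalises $\mathbf t=\mathbf 0$, $\mathbf s=\mathbf 1$, inducts on the ambient dimension $n$, and propagates the conclusion face by face using only two-variable volume inequalities (i.e.\ repeated applications of $F(1,1,\ast)-F(1,0,\ast)\ge F(0,1,\ast)-F(0,0,\ast)$ along $2$-faces of the cube). Your approach is more systematic and yields the sign identity $(-1)^{d}(\phi(\mathbf t)-i)\ge 0$ in one shot, with the neat fix of dropping one coordinate when $d$ is odd; the paper's approach is more elementary in that it never needs anything beyond the $2$-dimensional case of the volume condition. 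The treatments of \eqref{eq:approx1}, of $\mathbf t\approx_i\mathbf s$, and of \eqref{eq:approx2} are essentially the same in both, with the paper appealing to Lemma~\ref{le:2.1}(2) rather than (1) for the last step. One small point: your sentence ``$\pi^i_j(\mathbf t)\ge s^i_j$ is monotonicity'' deserves one extra line---if some $r<s^i_j$ gave $(t_1,\dots,r,\dots,t_n)\in T_i$, then monotonicity would force $\phi(s_1,\dots,r,\dots,s_n)=i$, contradicting $r<s^i_j$---but this is routine.
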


\begin{proof}
For simplicity, we denote $\mathbf t=(0,\ldots,0)$ and $\mathbf s=(1,\ldots,1)$, and we create an $n$-dimensional cube with all vertices $(i_1,\ldots,i_n)$, where each $i_j \in \{0,1\}$. We denote by $\mathbf e_j$ an $n$-dimensional vector consisting of $1$'s only and at the $j$-th coordinate we have $0$. Then $F(\mathbf e_j)\in M_i$ for each $j=1,\ldots,n$.

First, using the mathematical induction with respect to $n$, we prove that $F(0,\ldots,0)\in M_i$. If $n=1$, the statement is trivial. Let $n=2$. Then $F(1,1),F(1,0),F(0,1)\in M_i$ and $F(0,0)\in M_{i'}$ where $i'\le i$. Using the volume condition, we have $M_0 \ni F(1,1)-F(1,0)\ge F(0,1)-F(0,0)\in M_{i-i'}$, so that $i=i'$. Therefore, $F(0,0)\in M_i$.

Let $n=3$. Then using the volume condition of the form (\ref{eq:(3.7)}) to points $(0,0,1),(1,0,1),(0,1,1),(1,1,1)$, we obtain $M_0\ni F(1,1,1)-F(1,0,1)\ge F(0,1,1)-F(0,0,1)\in M_{i-i'}$ and $i=i'$, where $F(0,0,1)\in M_{i'}$, so that $F(0,0,1)\in M_i$. Repeating this ideas to all faces of the cube, we obtain finally that $F(0,\ldots,0)\in M_i$.

Assume that the statement holds for each $m\le n$ and we prove it for $n+1$. Take all $n+1$-dimensional vectors from $\{0,1\}^{n+1}$ whose first coordinate is $1$. It forms an $n$-dimensional subcube of our cube and by hypothesis, for all its vertices $\mathbf f$ we have $F(\mathbf f)\in M_i$. Fixing the second coordinate, we get that for all its vertices $\mathbf g$, we have $F(\mathbf g)\in M_i$. Finally, we obtain that also an $n$-dimensional subcube containing $(0,\ldots,0)$ has the property that $(0,\ldots,0)\in T_i$.

Summarizing, we see that (\ref{eq:approx1}) holds and since $x$ is an observable, from Lemma \ref{le:2.1}(2) we conclude the element in (\ref{eq:approx2}) exists and belongs to $M_i$ because
$$
x\big(\bigcap_{\pi_i(\mathbf s)\ll \mathbf t \le \mathbf s} (-\infty,t_1)\times\cdots\times (-\infty,t_n)\big)=\bigwedge\{F(\mathbf t)\colon \pi_i(\mathbf s)\ll \mathbf t \le \mathbf s\}.
$$

Clearly, each point $\mathbf s(t_j):= (s_1,\ldots,s_{j-1},t_j,s_{j+1},\ldots,s_n)$ belongs to $T_i$ for each $j=1,\ldots,n$, and according to proof of Proposition \ref{pr:3.3}, we have
$\pi^i_{j'}(\mathbf s(t_j))=\pi^i_{j'}(\mathbf s)$ for each $j'=1,\ldots,n$, so that $\mathbf s(t_j)\approx_i \mathbf s$ which entails $\mathbf t \approx_i \mathbf s$.
\end{proof}

We note that the element in (\ref{eq:approx2}) can be calculated also in the form
\begin{equation}\label{eq:approx3}
\bigwedge\{F(\mathbf t)\colon \pi_i(\mathbf s)\ll \mathbf t \le \mathbf s\}
= \bigwedge\{F(\mathbf t)\colon \mathbf t \in \mathbf s/\approx_i\}.
\end{equation}
Moreover, the elements of the latter equation exist in $M$, they belong to $M_i$, and they will play an important role by definition of an $n$-dimensional spectral resolution for lexicographic MV-algebras and lexicographic effect algebras. Moreover, as it was shown in \cite{DDL, DvLa1}, if $n=1$, then there are spectral resolutions which do not correspond to any observable with for which (\ref{eq:approx3}) does not exist in $M$. The existence of all elements (\ref{eq:approx3}) will be crucial for showing when an $n$-dimensional spectral resolution $F$ implies the existence of an $n$-dimensional observable $x$ such that $F=F_x$.

In addition, if $n=2$ then for $(s_1,t_1), (s_2,t_2)\in T_i$, we have if $(s_1,t_1)\not\approx_i (s_2,t_2)$, then $s_1\ne s_2$ and $t_1\ne t_2$. In addition, $s_1<s_2$ iff $t_1>t_2$.

The following simple lemma is of a remarkable importance which will be used later.

\begin{lemma}\label{le:rays}
Let $F$ be a two-dimensional spectral resolution on a $\Rad$-Dedekind $\sigma$-complete $k$-perfect MV-algebra and let $(s_0,t_0)$ be a characteristic point of $F$.
Given a vertical ray $\{(s_0,t)\colon t>t_0\}$ and two points $(s_1,t)\in T_m$, $(s_2,t)\in T_j$ such that $s_1\leq s_0<s_2$ and $t_0<t$, then $m<j$. Analogously for horizontal rays.
\end{lemma}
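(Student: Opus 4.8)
The plan is to obtain the weak inequality $m\le j$ for free from monotonicity and to spend all the effort on excluding the case $m=j$ by means of the volume condition; the assertion for horizontal rays then follows by interchanging the two coordinates. Since $s_1\le s_0<s_2$ and $F$ is monotone, $F(s_1,t)\le F(s_2,t)$, and since an element of $M_m$ can lie below an element of $M_j$ only if $m\le j$, this already gives $m\le j$. Assume towards a contradiction that $m=j$. Then for every $s$ with $s_1\le s\le s_2$ monotonicity yields $F(s_1,t)\le F(s,t)\le F(s_2,t)$ with both bounds in $M_m$, so the first ($\mathbb Z$-)coordinate of $F(s,t)$ is squeezed to $m$ and $F(s,t)\in M_m$. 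Hence the whole horizontal segment $\{(s,t)\colon s_1\le s\le s_2\}$ lies in $T_m$; in particular $(s_0,t)\in T_m$.

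Next I would manufacture an auxiliary point just above and to the right of the characteristic point. Write $(s_0,t_0)=\pi_i(\mathbf s^*)$ for some $\mathbf s^*=(s^*,t^*)\in T_i$ with $i\ge 1$, so that $(s_0,t_0)\in T_l$ with $l<i$. Replacing the first coordinate of $\mathbf s^*$ by $\pi^i_1(\mathbf s^*)=s_0$ produces, directly from the definition of the projection, a point $(s_0,t^*)$ of index strictly below $i$; in particular $(s_0,t^*)\notin T_i$, and since $(s^*,t^*)\in T_i$ with $s_0\le s^*$ we get $s_0<s^*$. Symmetrically $t_0<t^*$. I then pick $s'\in(s_0,\min\{s^*,s_2\}]$ and $\tau\in(t_0,\min\{t^*,t\})$, both intervals being non-empty because $s^*,s_2>s_0$ and $t^*,t>t_0$. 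As $(s_0,t_0)\ll(s',\tau)\le\mathbf s^*$, the two-dimensional instance of Lemma \ref{le:approx2}---whose verification at this level rests only on the volume condition (\ref{eq:(3.7)}) and is therefore available for an arbitrary spectral resolution---gives $(s',\tau)\in T_i$ together with $(s',\tau)\approx_i\mathbf s^*$. Consequently $\pi^i_1(s',\tau)=s_0$, and replacing the first coordinate of $(s',\tau)$ by $s_0$ once more yields $(s_0,\tau)\in T_{l'}$ for some $l'<i$.

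Finally I would feed the non-degenerate rectangle $A=[s_0,s')\times[\tau,t)$ (non-degenerate since $s_0<s'$ and $\tau<t$) into the volume condition (\ref{eq:(3.5)}). Reading off the four corners gives $V(F,A)=F(s',t)-F(s',\tau)-F(s_0,t)+F(s_0,\tau)$, where now $F(s',t),F(s_0,t)\in M_m$ by the segment observation, $F(s',\tau)\in M_i$, and $F(s_0,\tau)\in M_{l'}$ with $l'<i$. Performing the subtractions in $\mathbb Z\lex G$, the $\mathbb Z$-component of $V(F,A)$ equals $m-i-m+l'=l'-i\le -1<0$, so $V(F,A)<0$ in the lexicographic order, contradicting $V(F,A)\ge 0$. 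This excludes $m=j$ and, combined with $m\le j$, proves $m<j$.

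The delicate step is the placement of the auxiliary point $(s',\tau)$: one must guarantee at once that the block through $\mathbf s^*$ really extends strictly to the upper right of $(s_0,t_0)$ (so that $s_0<s^*$, $t_0<t^*$ and Lemma \ref{le:approx2} can be invoked) and that $\tau$ stays below both $t$ and $t^*$, so that the top edge of $A$ remains on the height-$t$ segment (index $m$) while the lower-right corner sits inside the block (index $i$) and the lower-left corner has already dropped to index $l'<i$. Once these four corner indices are pinned down, the lexicographic bookkeeping of the $\mathbb Z$-component is automatic and the contradiction is immediate.
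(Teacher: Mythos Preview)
Your proof is correct, but it follows a genuinely different route from the paper's argument. The paper proceeds directly: if $(s_0,t_0)=\pi_i(s_3,t_3)$ with $(s_3,t_3)\in T_i$, then in the case $t_0<t\le t_3$ one sees immediately (via the block structure around $(s_3,t_3)$) that points to the right of the ray have index $\ge i$ while $(s_1,t)\le (s_0,t_3)$ forces index $<i$ on the left, giving $m<i\le j$. For $t>t_3$ the paper applies the volume inequality $F(s_2,t)-F(s_1,t)\ge F(s_2,t_3)-F(s_1,t_3)$ to transport this separation upward. Thus the paper exhibits the explicit threshold $i$ and splits into two cases.

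Your argument instead assumes $m=j$, squeezes the entire segment at height $t$ into $T_m$, manufactures a point $(s',\tau)$ with $(s_0,t_0)\ll(s',\tau)\le \mathbf s^*$ lying in $T_i$, and reads off a negative $\mathbb Z$-component $l'-i$ from the single rectangle $[s_0,s')\times[\tau,t)$. This unifies the paper's two cases into one contradiction and is slightly more self-contained; you are also more explicit than the paper about the fact that the needed portion of Lemma~\ref{le:approx2} (that $(s',\tau)\in T_i$) uses only the volume condition and hence applies to an arbitrary spectral resolution. On the other hand, the paper's direct approach yields the sharper information $m<i\le j$, which is occasionally useful downstream. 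One minor simplification in your write-up: to get $(s_0,\tau)\in T_{l'}$ with $l'<i$ you do not need $\pi^i_1(s',\tau)=s_0$; it is enough to note $(s_0,\tau)\le(s_0,t^*)$ and that $(s_0,t^*)$ already has index $<i$.
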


\begin{proof}
Assume $s_0,s_1,s_2$ and $t$ are the reals from the statement. As $(s_0,t_0)$ is a characteristic point, it is associated to some $(s_3,t_3)\in T_i$. If $t_0<t\leq t_3$, then $(s_2,t)$ belongs to $T_j$ for $j\geq i$. But $(s_1,t)\leq (s_0,t)\leq(s_0,t_3)$ and $(s_0,t_3)$ belongs to $T_m$ with $m<i$.

If $t_3<t$, then the desired claim follows from the volume condition as
$F(s_2,t)-F(s_1,t)\geq F(s_2,t_3)-F(s_1,t_3)$ because if $F(s_2,t)\in M_j$, $F(s_1,t)\in M_m$, $F(s_2,t_3)\in M_{j'}$, and  $F(s_1,t_3)\in M_{m'}$ for some $j'$ such that $j\ge j'\ge i$ and $m'$ such $m'\le m$, we have $F(s_2,t)-F(s_1,t)\in M_{j-m}$ and $F(s_2,t_3)-F(s_1,t_3)\in M_{j'-m'}$ so that $j-m\ge j'-m' \ge i- m'>0$ and $j>m$.
\end{proof}

\begin{theorem}\label{th:approx4}
Let $F$ be a two-dimensional spectral resolution on a $\Rad$-Dedekind $\sigma$-complete $k$-perfect MV-algebra. Then every $T_i\ne \emptyset$, $i\ne 0$, has at most $k$ characteristic points. There could be at most $k^2$ characteristic points of $F$.
\end{theorem}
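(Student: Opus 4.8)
The plan is to bound the number of characteristic points of a single $T_i$ by $k$, and then obtain the global bound $k^2$ by summing over the $k$ nonzero levels $i=1,\dots,k$. Fix $i\ge 1$ with $T_i\neq\emptyset$ and suppose $p_1,\dots,p_r$ are pairwise distinct characteristic points of $T_i$; it suffices to prove $r\le k$ for every such finite family, since this simultaneously yields finiteness and the bound. The driving tool is Lemma~\ref{le:rays}, which tells us that the vertical line through a characteristic point $p=(s_0,t_0)$ acts as a strict ``wall'': at every height $t>t_0$, passing from a point with first coordinate $\le s_0$ to one with first coordinate $>s_0$ strictly increases the index $l$ for which the value lies in $M_l$.

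The crucial preliminary step is to show that distinct characteristic points of the same $T_i$ have distinct first coordinates. Write each $p_j=(a_j,b_j)=\pi_i(\mathbf s_j)$ for a sample $\mathbf s_j=(c_j,d_j)\in T_i$, so that $a_j=\pi_1^i(\mathbf s_j)$ and $b_j=\pi_2^i(\mathbf s_j)=\inf\{t\colon (c_j,t)\in T_i\}$, with $a_j<c_j$ and $b_j<d_j$. Suppose two of them, say $p_1,p_2$, satisfied $a_1=a_2=:a$ but $b_1\neq b_2$. Choosing $\varepsilon>0$ with $a+\varepsilon\le\min\{c_1,c_2\}$, Lemma~\ref{le:approx2} places $(a+\varepsilon,d_1)$ in the block of $p_1$ and $(a+\varepsilon,d_2)$ in the block of $p_2$, since in each case the point lies strictly above the characteristic point (in both coordinates) and below the corresponding sample. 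Reading off the second coordinate of the characteristic point from its definition then forces $\inf\{t\colon (a+\varepsilon,t)\in T_i\}=b_1$ and, simultaneously, $=b_2$, whence $b_1=b_2$, a contradiction. Thus $a_1,\dots,a_r$ are pairwise distinct.

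With this in hand the count is immediate. Relabel so that $a_1<\cdots<a_r$ and fix a height $T$ above all the $b_j$; such a finite $T$ exists because each $b_j>-\infty$ by $(\ref{eq:(3.4)})$. Choose auxiliary abscissae $q_0<a_1<q_1<a_2<\cdots<a_r<q_r$. For each $j$, applying Lemma~\ref{le:rays} to the characteristic point $p_j$ at height $T>b_j$, with $q_{j-1}\le a_j<q_j$, shows that the level at $(q_{j-1},T)$ is strictly smaller than the level at $(q_j,T)$. Since these levels all lie in $\{0,1,\dots,k\}$ and strictly increase $r$ times along $q_0,q_1,\dots,q_r$, we conclude $r\le k$. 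Hence every nonempty $T_i$ with $i\neq 0$ has at most $k$ characteristic points. Finally, the characteristic points of $F$ are precisely those contributed by $T_1,\dots,T_k$ (the set $T_0$ contributes none), so their total number is at most $k\cdot k=k^2$.

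The main obstacle is the distinctness step in the second paragraph. Without it, the wall-crossing argument of the third paragraph would only bound the number of distinct first coordinates, which a priori could be strictly smaller than the number of characteristic points; indeed, nothing in the naive picture forbids several blocks of $T_i$ from lining up over a common abscissa. What rules this out is exactly the block description of Lemma~\ref{le:approx2} combined with the single-valuedness of the bottom-boundary map $\sigma\mapsto\inf\{t\colon (\sigma,t)\in T_i\}$: two characteristic points sharing a first coordinate would assign two different values to this map at the same argument.
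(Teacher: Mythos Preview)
Your argument is correct and follows the same core strategy as the paper: use Lemma~\ref{le:rays} as a ``wall-crossing'' device so that $r$ distinct characteristic points force $r$ strict jumps of the level index along a horizontal line, hence $r\le k$. The only differences are cosmetic. For the distinctness of first coordinates within a single $T_i$, the paper simply invokes the note preceding Lemma~\ref{le:rays} (distinct blocks of $T_i$ have both coordinates distinct, with opposite orderings), whereas you rederive it from Lemma~\ref{le:approx2} together with the single-valuedness of $\sigma\mapsto\inf\{t:(\sigma,t)\in T_i\}$; both routes are valid. For the global bound $k^2$, the paper counts at most $k$ distinct first coordinates and $k$ distinct second coordinates among \emph{all} characteristic points of $F$ and multiplies, while you sum the per-level bound $k$ over the $k$ levels $i=1,\dots,k$; either way one lands at $k^2$.
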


\begin{proof}
Assume $(s_1,t_1),\ldots,(s_m,t_m)$ is an arbitrary finite system of mutually different characteristic points of $F$. Without loss of generality (see the note just before Lemma \ref{le:rays}) we can assume that $s_1<\cdots<s_m$; if some $s_i=s_j$, then $t_i\ne t_j$, so they will be distinguished in the second coordinate.
Now, if we take $t$ greater than all the $t_i$'s, we find some $s'_i$, $i=0,\ldots, m$ such that $s'_{i-1}<s_i<s'_i$. Applying Lemma \ref{le:rays}, we get a sequence of points $(s'_i,t)$'s, all from different $T_i$'s. Hence $m\leq k$. If we make a similar deduction for the second coordinate, we realize the statement is true. Hence, the number of characteristic points of $F$ is at most $k^2$.

Now, let $(s_1,t_1),\ldots,(s_m,t_m)$ be any finite system of mutually different characteristic points of $T_i$. If, e.g. $(s_1,t_1)\ne (s_2,t_2)$ are two characteristic points of $T_i$, then, say $s_1<s_2$, which implies $t_2<t_1$. Therefore, without loss of generality, we can assume that $s_1<\cdots<s_m$ and $t_1>\cdots> t_m$. Repeating the proof of the latter paragraph, we have $m\le k$.
\end{proof}

A strengthening of the latter result will be done in Theorem \ref{th:approx-num'} at the end of this section.

In the following simple examples, using observables $x$ of the form (\ref{eq:obser}), we illustrate different situations with $T_i$'s and characteristic points for $n=2$.

\begin{example}\label{ex:3.7}
Let $M=\Gamma(\mathbb Z\lex \mathbb Z,(2,0))$ and $a_1=(0,1)$, $a_2=(1,2)$, $a_3=(1,-3)$.

\begin{itemize}
\item[{\rm (1)}] Let $t_1=(1,1)$, $t_2=(2,2)$, $t_3=(3,3)$. Then $T_2= (2,+\infty)\times (2,+\infty)$, $T_1 = (2,+\infty)\times (2,+\infty)\setminus (1,+\infty)\times (1,+\infty)$, and $T_0=
    \mathbb R^2 \setminus (1,+\infty)\times (1,+\infty)$. Characteristic points are $(2,2)$ and $(3,3)$.

\item[{\rm (2)}] Let $t_1=(1,1)$, $t_2=(2,2)$, $t_3=(3,2)$. Then
$T_2 = (3,+\infty)\times (2,+\infty)$, $T_1= (2,3]\times (2,+\infty)$, $T_0= \mathbb R^2\setminus (2,+\infty)\times (2,+\infty)$. Characteristic points are $(2,2)$ and $(3,2)$.

\item[{\rm (3)}] Let $t_1=(1,1)$, $t_2=(2,2)$, $t_3=(2,3)$. Then $T_2=(2,+\infty)\times (3,+\infty)$, $T_1= (2,+\infty)\times (2,+\infty)\setminus (2,+\infty)\times (3,+\infty)$, $T_0= \mathbb R^2 \setminus (2,+\infty)\times (2,+\infty)$. Characteristic points are $(2,2)$ and $(2,3)$.

\item[{\rm (4)}] Let $t_1=(1,1)$, $t_2= (3,3)$, $t_3=(4,2)$. Then $T_2= (4,+\infty)\times (2,+\infty)$, $T_1= (3,4]\times (3,+\infty)$ and $T_0=\mathbb R^2 \setminus (T_1\cup T_2)$. Characteristic points are $(3,3)$ and $(4,2)$.

\item[{\rm (5)}] Let $a_1=(0,2)$ and $a_2 = (2,-2)$. If $t_1=(1,1)$ and $t_2 = (2,2)$, then $T_2= (2,+\infty)\times (2,+\infty)$, $T_1=\emptyset$ and $T_0=\mathbb R^2 \setminus T_1$. The only characteristic point is $(2,2)$.

\item[{\rm (6)}] Let $M=\Gamma(\mathbb Z\lex \mathbb Z,(3,0))$ and $a_1=(1,1)$, $a_2=(1,2)$, $a_3=(1,-3)$ and $t_1=(1,2)$, $t_2=(2,1)$, $t_3=(3,3)$. Then $T_3 = (3,+\infty)\times (3,+\infty)$, $T_2 = (2,+\infty)\times (1,+\infty)\setminus T_3$, $T_1= (1,2]\times (2,+\infty)$, $T_0=(1,+\infty)\times (2,+\infty)\setminus (T_1\cup T_2\cup T_3)$ and the characteristic points are $(1,2)$, $(2,1)$, $(3,3)$.

\item[{\rm (7)}] Let $M=\Gamma(\mathbb Z\lex \mathbb Z,(3,0))$ and $a_1=(1,1)$, $a_2=(1,2)$, $a_3=(1,-3)$ and $t_1=(1,3)$, $t_2=(2,2)$, $t_3=(3,1)$. Then $T_3 = (3,+\infty)\times (3,+\infty)$, $T_2= (2,3]\times (2,+\infty) \cup (3,+\infty)\times (2,3]$, $T_1 = (1,2] \cup (3,+\infty) \cup (2,3] \times (2,3] \cup (3,+\infty) \times (1,2]$, $T_0= \mathbb R^2 \setminus (T_1\cup T_2 \cup T_3)$, and the characteristic points are $(1,3)$, $(2,2)$, $(3,1)$, $(2,3)$, $(3,2)$ and $(3,3)$. See Figure 1.

\item[{\rm (8)}] Let $M=\Gamma(\mathbb Z\lex \mathbb Z,(3,0))$ and $a_1=(1,1)$, $a_2=(2,-1)$, and $t_1=(1,2)$, $t_2=(2,1)$. Then $T_3 = (2,+\infty)\times (2,+\infty)$, $T_2=(2,+\infty)\times (1,2]$, $T_1=(1,2] \times (2,+\infty)$, $T_3=\mathbb R^2 \setminus (T_1\cup T_2\cup T_3)$ and the characteristic points are $(1,2),(2,1),(2,2)$.

\item[{\rm (9)}] Let $M=\Gamma(\mathbb Z\lex \mathbb Z,(2,0))$ and $a_1=(1,2)$, $a_2=(1,1)$, $a_3=(0,-3)$ and $t_1=(1,3)$, $t_2=(3,2)$, $t_3=(2,0)$. Then $T_1=(1,3] \times (3,+\infty)$, $T_2=(3,+\infty)\times (2,3)$, $T_3=(3,+\infty)\times (3,+\infty)$, $T_0=\mathbb R^2 \setminus (T_1\cup T_2\cup T_3)$, and the characteristic points are $(1,3),(3,2), (3,3)$.
\end{itemize}
\end{example}

\begin{figure}%\label{Fano'}
\begin{center}
\unitlength 1.00mm
\linethickness{0.4pt}
\begin{picture}(28.00,30.44)
\put(-10.00,9.00){\line(0,1){25.00}}
\put(-10.00,9.00){\line(1,0){50.00}}
\put(0.00,25.00){\line(0,1){5.00}}
\put(0.00,25.00){\line(1,0){35.00}}
\put(10.00,20.00){\line(0,1){10.00}}
\put(10.00,20.00){\line(1,0){25.00}}
\put(20.00,15.00){\line(0,1){15.00}}
\put(20.00,15.00){\line(1,0){15.00}}
\put(0.00,25.00){\line(0,1){5.00}}
\put(0.00,15.00){$T_0$}
\put(3.00,26.00){$T^1_1$}
\put(13.00,26.00){$T^1_2$}
\put(23.00,26.00){$T_3$}
\put(13.00,21.00){$T^2_1$}
\put(23.00,21.00){$T^2_2$}
\put(23.00,16.00){$T^3_1$}
\put(0.00,25.00){\circle*{1.00}}
\put(10.00,20.00){\circle*{1.00}}
\put(20.00,15.00){\circle*{1.00}}
\put(20.00,20.00){\circle*{1.00}}
\put(20.00,25.00){\circle*{1.00}}
\put(10.00,25.00){\circle*{1.00}}
\put(-12.00,29.00){$t$}
\put(35.00,6.00){$s$}
\put(-10.00,-2.00){$T_1=T^1_1\cup T^2_1\cup T^3_1$, $T_2=T^1_2\cup T^2_2$}
\end{picture}
\end{center}
\caption{Example \ref{ex:3.7}(7)\label{fano'}}
\end{figure}

In the following result, we show how it is with characteristic points of $F$, where $F(s_1,\ldots,s_n)=x((-\infty,s_1)\times\cdots\times (-\infty,s_n))$, $s_1,\ldots,s_n\in \mathbb R$ and $x$ is an $n$-dimensional observable on a lexicographic MV-algebra $M=\Gamma(H\lex G,(u,0))$, where $(H,u)$ is a linearly ordered unital group and $G$ is a Dedekind $\sigma$-complete $\ell$-group. We recall $[0,u]_H:=\{h \in H\colon 0\le h \le u\}$ and  similarly we define $[0,u)_H$ or $(0,u)_H$ and $(0,u]_H$, etc. For any $h \in [0,u]_H$, let $M_h=\{(a,b)\in \Gamma(H\lex G,(u,0))\colon a=h\}$, and
let $T_h=\{(s_1,\ldots,s_n)\in \mathbb R^n\colon F(s_1,\ldots,s_n)\in M_h\}$. Then $T_0$ and $T_u$ are non-void. Let $h \in (0,u)_H$ be such that $T_h \ne \emptyset$ be fixed.  For every $\mathbf s = (s_1,\ldots,s_n)\in T_h$, we define projections
$$
s^h_j= \inf\{s\in \mathbb R\colon (s_1,\ldots,s_{j-1},s,s_{j+1},\ldots,s_n)\in T_h\}
$$
and we write $s^h_j=\pi_j^h(\mathbf s)$. Then the point $(s_1,\ldots,s^h_j,\ldots,s_n)\in T_{h'}$ for some $h'\in [0,h)_H$ and
for each $j=1,\ldots,n$. The point $\pi_h(\mathbf s)=(\pi^h_1(\mathbf s),\ldots,\pi^h_n(\mathbf s))$ is said to be a {\it characteristic point} of $F$ in $T_h$ or, simply a characteristic point of $F$. As in the proof of Proposition \ref{pr:3.4}, we can show that $T_u$ has a unique characteristic point.

For each $h \in (0,u]_H$ with $T_h$ non-void, we define a relation $\approx_h$ as follows: For $\mathbf s=(s_1,\ldots,s_n)\in T_h$ and
$\mathbf t=(t_1,\ldots,t_n)\in T_h$, we write $\mathbf s \approx_h \mathbf t$ iff $\pi^h_i(\mathbf s)=\pi^h_i(\mathbf t)$ for each $i=1,\ldots,n$. Then $\approx_h$ is an equivalence and let $\mathbf s/\approx_h =\{\mathbf t \colon \mathbf t \approx_h \mathbf s\}$; it is called also a {\it block} (of $T_h$). Due to definition of a block, it is clear that every block has a unique characteristic point. It is possible to show that Lemmas/Theorem \ref{le:approx2}--\ref{th:approx4} hold also for this case of general lexicographic MV-algebras.

It is necessary to underline that the characteristic points and blocks can be defined in the same way also for general $n$-dimensional spectral resolutions (not only for those $F$ which are associated to some $n$-dimensional observable $x$, i.e. $F=F_x$). Also in such a case, every block has a unique characteristic point.

Now, we show that if $x$ is a two-dimensional observable on $M=\Gamma(H\lex G,(u,0))$, then $F=F_x$ has only finitely many characteristic points.

\begin{theorem}\label{th:approx6}
Suppose that $x$ is a two-dimensional observable on an MV-algebra $M=\Gamma(H\lex G,(u,0))$, where $(H,u)$ is a unital linearly ordered
group and $G$ is a Dedekind $\sigma$-complete $\ell$-group.
Then $F(s,t)=x((-\infty,s)\times (-\infty,t))$, $s,t \in \mathbb R$, is a two-dimensional spectral resolution with finitely many characteristic points.
\end{theorem}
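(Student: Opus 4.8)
The claim that $F$ is a two-dimensional spectral resolution is immediate from Lemma \ref{le:3.1} with $n=2$, so the entire content is the finiteness of the set of characteristic points. The plan is to split this into two facts: (A) only finitely many layers $T_h$ are non-empty, and (B) once finitely many layers occur, the ray technique of Lemma \ref{le:rays} and Theorem \ref{th:approx4} bounds the characteristic points. Throughout I write $\varrho\colon H\lex G\to H$ for the projection onto the first coordinate, so that $F(s,t)\in M_h$ exactly when $\varrho(F(s,t))=h$; since $\varrho$ is a monotone group homomorphism, $\varrho\circ F\colon\mathbb R^2\to[0,u]_H$ is non-decreasing in each variable.

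Step (A) is the heart of the matter, and the only place where the passage from $H=\mathbb Z$ (the $k$-perfect case) to a general linearly ordered $H$ needs a genuinely new idea. I would prove that the range of $\varrho\circ F$ is finite by contradiction. If it were infinite, then, being an infinite subset of the linearly ordered set $[0,u]_H$, it would contain a strictly increasing sequence of values $v_1<v_2<\cdots$, realized at points $\mathbf p_n=(s_n,t_n)$ with $\varrho(F(\mathbf p_n))=v_n$. Passing to a subsequence I may assume each coordinate sequence is monotone; as $v_n$ is strictly increasing and $\varrho\circ F$ is monotone, the case $s_n\searrow,\ t_n\searrow$ is excluded (it would force $v_n$ to decrease), and a coordinate that stays constant forces the points to be comparable. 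Thus there remain essentially three configurations, and in each I would exhibit a pairwise disjoint family of Borel sets whose $x$-values all have strictly positive first component. In the comparable case $\mathbf p_n\le\mathbf p_{n+1}$ take the nested differences of $B_n=(-\infty,s_n)\times(-\infty,t_n)$, so that $x(B_{n+1}\setminus B_n)=F(\mathbf p_{n+1})-F(\mathbf p_n)$ has first component $v_{n+1}-v_n>0$. In the antichain case $s_n\nearrow,\ t_n\searrow$ take the strips $W_n=[s_n,s_{n+1})\times(-\infty,t_n)$: their first-coordinate intervals are disjoint, $x(W_n)=F(s_{n+1},t_n)-F(s_n,t_n)\ge 0$, and since $(s_{n+1},t_n)\ge(s_{n+1},t_{n+1})$ its first component is at least $v_{n+1}-v_n>0$. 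The remaining antichain is symmetric, using $(-\infty,s_n)\times[t_n,t_{n+1})$. In every case disjointness gives, by Definition \ref{de:3.1}(ii), that $\big(x(W_n)\big)_n$ is summable and that the sum $\sum_n x(W_n)=x(\bigcup_n W_n)$ \emph{exists} in $M$; but an existing sum forces its ascending partial sums to attain the layer of their supremum (Lemma \ref{le:2.1}(1)), equivalently all but finitely many summands lie in $M_0$ (Lemma \ref{le:2.1}(3)), which contradicts having infinitely many with positive first component. Hence $\varrho\circ F$ has finite range; let $N$ be the number of non-empty layers $T_h$.

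For Step (B) I would invoke the two-dimensional machinery already extended to general lexicographic $M$ in the remark preceding this theorem (that Lemma \ref{le:rays} and Theorem \ref{th:approx4} carry over). Let $(s_1,t_1),\dots,(s_r,t_r)$ be distinct characteristic points of $F$, and let $\sigma_1<\cdots<\sigma_p$ be the distinct values occurring among their first coordinates. Fix $t$ larger than every $t_i$ and choose interlacing reals $\sigma'_0<\sigma_1<\sigma'_1<\cdots<\sigma_p<\sigma'_p$. For each $j$, applying Lemma \ref{le:rays} to a characteristic point lying over $\sigma_j$ (its second coordinate is $<t$) with $s_1=\sigma'_{j-1}\le\sigma_j<\sigma'_j=s_2$ shows that the layer of $F(\sigma'_{j-1},t)$ is strictly below that of $F(\sigma'_j,t)$; chaining these, $F(\sigma'_0,t),\dots,F(\sigma'_p,t)$ occupy $p+1$ distinct layers, so $p\le N-1$. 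The horizontal version of Lemma \ref{le:rays} gives likewise at most $N-1$ distinct second coordinates. Since a characteristic point is determined by its coordinate pair, there are at most $(N-1)^2$ of them, which is finite and proves the theorem.

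The expected main obstacle is precisely the antichain subcase of Step (A): when the witnessing points are pairwise incomparable one cannot use nested sets, and the right device is the half-infinite strip $W_n=[s_n,s_{n+1})\times(-\infty,t_n)$, whose pairwise disjointness comes from the disjoint first-coordinate intervals while the positivity of $\varrho\big(x(W_n)\big)\ge v_{n+1}-v_n$ comes from comparing $(s_{n+1},t_n)$ with $(s_{n+1},t_{n+1})$. The subtle point to verify is that this forced positivity is incompatible with the \emph{existence} of the sum $\sum_n x(W_n)$ in $M$ — it is here, and not in mere summability, that the observable axiom Definition \ref{de:3.1}(ii) together with Lemma \ref{le:2.1}(1) does the work that makes the general $H$ case go through.
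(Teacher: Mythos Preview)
Your proof is correct, with one small caveat in Step~(A): an infinite subset of a linear order need not contain a strictly \emph{increasing} sequence (it could have order-type $\omega^*$), so you should extract a strictly monotone sequence and note that the decreasing case is handled symmetrically; once that is said, the disjoint-strip argument and the appeal to Lemma~\ref{le:2.1}(3) go through as written.

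Your route differs from the paper's. The paper does not prove that the global range of $\varrho\circ F$ is finite; instead it first locates the unique characteristic point $(s_u,t_u)$ of $T_u$, fixes $s'>s_u$, observes that $A\mapsto x((-\infty,s')\times A)$ is a one-dimensional observable on the interval $[0,x((-\infty,s')\times\mathbb R)]$, and then invokes \cite[Thm~4.2]{DvLa1} to conclude that $t\mapsto F(s',t)$ meets only finitely many layers $T_h$ (and symmetrically for $s\mapsto F(s,t')$ with $t'>t_u$). From there the ray argument of Lemma~\ref{le:rays} bounds the number of distinct first (respectively second) coordinates of characteristic points, exactly as in your Step~(B). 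So your Step~(A) replaces the reduction to the one-dimensional theory by a direct summability argument with disjoint Borel strips, and in fact yields the stronger intermediate statement that only finitely many $T_h$ are non-empty globally. Your approach is more self-contained (no external citation) and gives a cleaner bound; the paper's approach has the virtue of reusing already-established one-dimensional machinery. Step~(B) is essentially identical in both proofs.
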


\begin{proof}
Similarly as in the proof of Proposition \ref{pr:3.4}, $T_u$ has a unique characteristic point $(s_u,t_u)$, so that $(s,t)\in T_u$ iff $(s_u,t_u)\ll(s,t)$. Let $(s,t)$ be a characteristic point of some $T_h$ for some $h\in (0,u)_H$. Then $(s_u,t_u)\not\ll (s,t)$.
Applying Lemma \ref{le:rays} to the characteristic points $(s,t)$ and $(s_u,t_u)\in T_u$, we obtain that $(s,t)<(s_u,t_u)$, that is, all characteristic points of $F$ are below $(s_u,t_u)$.

%If $s<s_u$, then $t\le t_u$. If $s=s_u$, then $t<t_u$. If $s>s_u$, then $t<t_u$ and applying Lemma \ref{le:rays} to the characteristic points $(s,t)$ and $(s_u,t_u)\in T_u$, we obtain a contradiction. Hence, all characteristic points of $F$ are below $(s_u,t_u)$, i.e. $(s,t)\le (s_u,t_u)$. We can assume that $(s,t)\ll (s_u,t_u)$.

We know, that if we fix any $s'$, then $x_{s'}$, where $x_{s'}(A)=x((-\infty,s)\times A)$, $A \in \mathcal B(\mathbb R)$, can be viewed as a one-dimensional observable on the MV-algebra $[0,u_s]\subseteq M$, where $u_s=x((-\infty,s)\times \mathbb R)$. Using \cite[Thm 4.2]{DvLa1},
then $F(s',t)$, $t \in \mathbb R$, meets only finitely many $T_h$'s. An analogous result holds for a fixed $t'$.

In particular this holds for any $s' >s_u$ as well as for any $t'>t_u$ and let $m_{s'}$ and $m_{t'}$ be the number of these intersections for a fixed $s'>s_u$ and fixed $t'>0$, respectively. Let $\mathbf s_1=(s_1,t_1),\ldots,\mathbf s_k=(s_k,t_k)$ be any finite system of mutually different characteristic points of $F$. Let $t_{i_1}>\cdots> t_{i_l}$ be all those $t_i$'s that are mutually different.
In view of Lemma \ref{le:rays}, there are only finitely many reals, which occur as the first coordinate of some characteristic point from $\mathbf s_1,\ldots,\mathbf s_k$. If some $t_i=t_j$, they can be distinguished in the first coordinate. So we have less than $m_{s'}$ characteristic points which have different $t_i$'s.
Repeating this procedure for the second coordinate and $t'>t_u$, we have that also in this case we have only finitely many characteristic points with different first coordinates.
Hence, we conclude that $k\le m_{s'}\cdot m_{t'}$, and there are only finitely many characteristic points.
\end{proof}

In \cite{DvLa1}, it was shown that even for a one-dimensional mapping $F: \mathbb R\to M$ which satisfies (\ref{eq:(3.1)})--(\ref{eq:(3.5)}) it can happen that it has infinitely many characteristic points.
For a general two-dimensional spectral resolution, that is a mapping $F: \mathbb R^2 \to M$ satisfying only (\ref{eq:(3.1)})--(\ref{eq:(3.5)}), a situation like the one sketched in the Figure~\ref{Fig 2} could occur. So there could be infinitely many characteristic points. Therefore, such $F$ cannot be extended to a two-dimensional observable.

\begin{figure}%\label{Fig 1}
\begin{center}
\unitlength 1.00mm
\linethickness{0.4pt}
\begin{picture}(28.00,30.44)
\put(-10.00,0.00){\line(0,1){25.00}}
\put(-10.00,0.00){\line(1,0){50.00}}

\put(-10.00,0.00){\circle*{1.00}}

\put(6.00,16.00){\line(0,1){9.00}}
\put(6.00,16.00){\line(1,0){34.00}}

\put(6.00,16.00){\circle*{1.00}}

\put(-2.00,8.00){\line(0,1){17.00}}
\put(-2.00,8.00){\line(1,0){42.00}}

\put(-2.00,8.00){\circle*{1.00}}

\put(-6.00,4.00){\line(0,1){21.00}}
\put(-6.00,4.00){\line(1,0){46.00}}

\put(-6.00,4.00){\circle*{1.00}}

\put(-8.00,2.00){\line(0,1){23.00}}
\put(-8.00,2.00){\line(1,0){48.00}}

\put(-8.00,2.00){\circle*{1.00}}

\put(-9.00,1.00){\line(0,1){24.00}}
\put(-9.00,1.00){\line(1,0){49.00}}

\put(-9.00,1.00){\circle*{1.00}}
\end{picture}
\end{center}
\caption{Example \label{Fig 2}}
\end{figure}

However some finiteness property needs to hold:

\begin{lemma}\label{le:approx7}
Let $F:\mathbb R^2\to M$ be a mapping satisfying {\rm (\ref{eq:(3.1)})--(\ref{eq:(3.5)})}.
Then there is no infinite bounded antichain of characteristic points.
\end{lemma}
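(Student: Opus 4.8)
The plan is to argue by contradiction: I would assume that $\{(s_\alpha,t_\alpha)\}_\alpha$ is an infinite bounded antichain of characteristic points and manufacture from it an infinite, strictly increasing, \emph{bounded} chain of first coordinates (i.e. of elements of $[0,u]_H$) realized along a single horizontal line; this will contradict the monotone $\sigma$-completeness encoded in Lemma \ref{le:2.1}(1). Recall that two points of $\mathbb{R}^2$ are incomparable in the coordinatewise order exactly when one has the strictly larger first coordinate and the strictly smaller second coordinate; hence in an antichain all first coordinates are distinct, all second coordinates are distinct, and ordering by the first coordinate reverses the order of the second. Since the antichain is infinite and bounded, its set of first coordinates is an infinite bounded subset of $\mathbb{R}$ and therefore contains a strictly monotone convergent sequence. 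By the coordinate symmetry of the statement (Lemma \ref{le:rays} holds for horizontal rays as well), I may assume this sequence is strictly increasing, giving characteristic points $(s_i,t_i)$ with $s_i\nearrow s^*<+\infty$ and $t_i\searrow$, all contained in a box $[a,b]\times[c,d]$.

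Next I would fix a height $t>\sup_i t_i$ (possible since the $t_i$ are bounded, e.g. $t=d+1$) and interleave the first coordinates by choosing reals $r_0<s_1<r_1<s_2<r_2<\cdots$ with $r_i\in(s_i,s_{i+1})$, so that $r_{i-1}\le s_i<r_i$ and $r_i\nearrow s^*$. Applying Lemma \ref{le:rays} to the characteristic point $(s_i,t_i)$ and the two points $(r_{i-1},t),(r_i,t)$ (the hypotheses $r_{i-1}\le s_i<r_i$ and $t_i<t$ hold), I get that the $H$-level of $F(r_{i-1},t)$ is strictly below that of $F(r_i,t)$. Thus the levels $h_i$ of $F(r_i,t)$ form a strictly increasing sequence in $[0,u]_H$, while $F(r_0,t)\le F(r_1,t)\le\cdots$ is monotone and bounded above by $F(s^*,t)\in M$. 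From the left-continuity axiom \eqref{eq:(3.3)} one derives the single-coordinate version $\bigvee_{s<s^*}F(s,t)=F(s^*,t)$ (for any $s<s^*$ and $t'<t$ one has $(s,t')\ll(s^*,t)$, so $F(s^*,t)=\bigvee_{(s,t')\ll(s^*,t)}F(s,t')\le\bigvee_{s<s^*}F(s,t)\le F(s^*,t)$), and since the $r_i$ are cofinal in $(-\infty,s^*)$ this gives $\bigvee_i F(r_i,t)=F(s^*,t)$. Writing $h^*$ for the level of $F(s^*,t)$, Lemma \ref{le:2.1}(1) forces some $F(r_j,t)$ to already lie in $M_{h^*}$, i.e. $h_j=h^*$; but then $h_{j+1}>h_j=h^*$, contradicting $F(r_{j+1},t)\le F(s^*,t)\in M_{h^*}$, which yields $h_{j+1}\le h^*$. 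This contradiction completes the argument.

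I expect the main obstacle to be bookkeeping rather than a single hard step: one must check that Lemma \ref{le:rays} is available in the present generality (it is, since its proof uses only the volume condition \eqref{eq:(3.5)} and so survives for an arbitrary $F:\mathbb{R}^2\to M$ satisfying \eqref{eq:(3.1)}--\eqref{eq:(3.5)}), and one must be careful to invoke the \emph{boundedness} hypothesis exactly where it is needed --- namely to guarantee that $s^*=\sup_i s_i$ is finite, so that $F(s^*,t)$ is a genuine element of $M$ and the supremum argument applies. This is precisely the feature that fails for the infinite \emph{chain} of characteristic points of Figure \ref{Fig 2}, where the points run off to infinity and no limiting element of $M$ is available. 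A secondary point requiring care is the reduction to a single monotone coordinate together with the appeal to coordinate symmetry, and the elementary derivation of single-coordinate left-continuity from the joint left-continuity \eqref{eq:(3.3)}.
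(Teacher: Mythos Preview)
Your proof is correct and follows essentially the same route as the paper's: extract from the bounded antichain a subsequence with strictly monotone coordinates, use Lemma~\ref{le:rays} along a fixed horizontal line to force the $H$-levels of $F$ to be strictly increasing, and contradict left-continuity via Lemma~\ref{le:2.1}(1). The only cosmetic differences are that the paper invokes the Monotone Subsequence Theorem twice and then applies Lemma~\ref{le:rays} directly at the points $(s_n,t_1)$ (using the equality case $s_1\le s_0$ in that lemma), whereas you interleave auxiliary points $r_i$ and work at a height $t>\sup_i t_i$; your version is slightly more explicit in spelling out the single-variable left-continuity and the role of Lemma~\ref{le:2.1}(1), but the underlying argument is the same.
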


\begin{proof} We will use the following well-known Monotone Subsequent Theorem (MST), see e.g. \cite[Thm 3.4.7]{BaSh}
saying: For each real sequence we can find a monotone subsequence.

Suppose $\big((s_n,t_n)\big)_n$ is a bounded antichain. By (MST), there is an increasing sequence $(n_i)_i$ of integers such that $(s_{n_i})_i$ is a monotone sequence. We can use (MST) again to find a subsequence $(n_{i_j})_j$ of the sequence $(n_i)_i$ such that both $(s_{n_{i_j}})_j$ and $(t_{n_{i_j}})_j$ are monotone as well as bounded. Hence, we may assume without lost of generality that the coordinates $s_n$'s and $t_n$'s of our original antichain both satisfy the monotony property.

As $\big((s_n,t_n)\big)_n$ is an antichain, both the first and the second coordinates are strictly monotone sequences and they are of opposite directions. From symmetry we may assume $(s_n)_n$ is (strictly) increasing and $(t_n)_n$ is (strictly) decreasing. But then the points $F(s_n,t_1)$, $ n\ge 1$, all lie in different $T_h$'s, see Lemma~\ref{le:rays}. It follows $\bigvee_n F(s_n,t_1)$ cannot exist. However, the definition of spectral resolution demands to exist and to equal $F(s_0,t_1)$, where $s_0=\sup_n s_n$, so we have a contradiction.
\end{proof}

However, the previous Lemma is the best we can hope. The schema of characteristic points in Figure \ref{Fig 3} demonstrates a situation, where lengths of antichains are unbounded: In the pictured situation all the characteristic points are placed in a union of an infinite chain of squares and as the squares are getting smaller, they contain an antichain of higher length.

\begin{figure}
\centering
\caption{Spectral resolution with unbounded antichains of characteristic points}
\label{Fig 3}
\end{figure}

\begin{theorem}\label{th:approx8}
Let $F:\mathbb R^2\to M$ be a mapping satisfying {\rm (\ref{eq:(3.1)})--(\ref{eq:(3.5)})}. Then $F$ has only countable many characteristic points.
\end{theorem}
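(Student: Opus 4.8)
The plan is to exploit that the blocks, ranging over all $T_h$ with $h>0$, form a partition of $\mathbb R^2\setminus T_0$ into pairwise disjoint sets, each carrying exactly one characteristic point. Since every characteristic point of $F$ is the characteristic point of some block, the assignment sending a block to its characteristic point is a surjection from the set of all blocks onto the set of all characteristic points; hence it suffices to prove that there are only countably many blocks. To count blocks I would produce an injection of the set of blocks into $\mathbb Q^2$: if each block contains a point with rational coordinates, then disjointness of the blocks forces distinct blocks to receive distinct rational points, and countability follows at once.

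Thus everything reduces to the following key claim, which I regard as the main obstacle: \emph{every block has nonempty interior} (and so contains a rational point). Fix a block $B=\mathbf s/\approx_h$ of some $T_h$, $h>0$, with characteristic point $\pi_h(\mathbf s)=(s_0,t_0)$. I would first show that every $\mathbf a=(a_1,a_2)\in B$ satisfies $(s_0,t_0)\ll \mathbf a$, i.e.\ $s_0<a_1$ and $t_0<a_2$. By definition of $\pi_h$ we have $(s_0,t_0)\le \mathbf a$, so it is enough to exclude $a_1=s_0$ and $a_2=t_0$. Suppose, say, $a_1=s_0$ while $a_2>t_0$. Approaching $\mathbf a$ from the strict south-west along $(s_0-\tfrac{1}{n},\,a_2-\tfrac{1}{n})$, left-continuity (\ref{eq:(3.3)}) together with Lemma~\ref{le:2.1}(1) produces a point $(s',t')\ll \mathbf a$ with $F(s',t')\in M_h$; by monotonicity $F(s',a_2)$ is squeezed between $F(s',t')\in M_h$ and $F(s_0,a_2)\in M_h$, hence $(s',a_2)\in T_h$ with $s'<s_0$. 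This forces $\pi^h_1(\mathbf a)\le s'<s_0$, contradicting $\pi^h_1(\mathbf a)=s_0$. The case $a_2=t_0$, and the corner $(s_0,t_0)$ itself (which lies in a lower $T_{h'}$ with $h'<h$, hence not in $B$), are excluded symmetrically. Once $(s_0,t_0)\ll \mathbf a$ is established, Lemma~\ref{le:approx2} applied to $\mathbf a$ yields $(s_0,a_1]\times(t_0,a_2]\subseteq B$, so that the nonempty open rectangle $(s_0,a_1)\times(t_0,a_2)$ lies inside $B$.

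With the key claim in hand the theorem follows: choose for each block a point of $\mathbb Q^2$ in its interior; the resulting assignment is injective because the blocks are pairwise disjoint, so there are at most countably many blocks and therefore at most countably many characteristic points of $F$. I would stress that this argument uses only the defining properties (\ref{eq:(3.1)})--(\ref{eq:(3.5)}) and Lemma~\ref{le:approx2}, and works verbatim for a general lexicographic MV-algebra $M=\Gamma(H\lex G,(u,0))$ even when the set $[0,u]_H$ of levels is uncountable; in particular it does not rely on any bound on the number of levels, and is independent of Lemma~\ref{le:approx7}. The delicate point throughout is precisely the interior claim, where left-continuity is what prevents a block from degenerating onto a line through its characteristic point.
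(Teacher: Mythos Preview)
Your argument is correct and gives a genuinely different proof from the paper's.

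The paper argues by slicing along one coordinate: for a fixed $s_0$, left-continuity together with Lemma~\ref{le:2.1}(1) forces $t\mapsto F(s_0,t)$ to stay in a single $M_h$ on a left neighbourhood of each $t_0$, so the vertical line $\{s_0\}\times\mathbb R$ meets only countably many levels; combined with Lemma~\ref{le:rays} (crossing the ray through a characteristic point forces a level jump), this bounds the number of first coordinates, and symmetrically the number of second coordinates, of characteristic points by a countable set.

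Your route is directly two-dimensional: you show each block contains a nonempty open rectangle, hence a rational point, and pairwise disjointness of the blocks yields an injection into $\mathbb Q^2$. This avoids Lemma~\ref{le:rays} entirely and makes the geometry of blocks do the work. The heart of both proofs is the same left-continuity manoeuvre via Lemma~\ref{le:2.1}(1): in the paper it produces a one-dimensional interval on which the level is constant; in your version it produces a point strictly south-west of any $\mathbf a\in B$ still in $T_h$, from which order-convexity of $M_h$ and your block argument build the open rectangle. Your phrasing also extends verbatim to $n>2$, whereas the paper's coordinate-by-coordinate count is tied to $n=2$.

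One small point of care: Lemma~\ref{le:approx2} as stated assumes $F=F_x$, and part of its conclusion---existence of the infimum (\ref{eq:approx2})---genuinely needs that. You use only the block-membership assertion ``$\pi_i(\mathbf s)\ll\mathbf t\le\mathbf s$ implies $\mathbf t\approx_i\mathbf s$'', whose proof relies solely on the volume condition and monotonicity and therefore holds for any $F$ satisfying (\ref{eq:(3.1)})--(\ref{eq:(3.5)}). It would be cleaner to say this explicitly rather than cite the lemma wholesale.
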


\begin{proof}
We can repeat the proof of Theorem~\ref{th:approx4}, the only change is that in the last argument we deduce that there are at most countably many reals that occur as the first (respectively second) coordinate of some characteristic point. We use here a fact that, for each fixed $s_0\in\mathbb{R}$, $\{F(s_0,t)\colon t\in\mathbb{R}\}$ meets only countably many $M_h$'s. Indeed, given $s_0,t_0\in\mathbb{R}$, we have $F(s_0,t_0)=\bigvee_{t<t_0} F(s_0,t)$. So there has to be some $t_1$, such that all $F(s_0,t)$'s, $t_1<t<t_0$, live in one $M_h$ (for some $h\in H$). If $\{F(s_0,t)\colon t\in\mathbb{R}\}$ meets uncountably many different $M_h$'s, we would have an uncountable collection of disjoint non-empty open intervals in $\mathbb{R}$. But that is a contradiction, as $\mathbb{R}$ has a countable dense subset $\mathbb{Q}$.
\end{proof}

As we have seen, a necessary condition for a two-dimensional spectral resolution $F$ on a lexicographic MV-algebra $\Gamma(H\lex G,(u,0))$ is that $F$ has finitely many characteristic points. Therefore, we say that an $n$-dimensional spectral resolution $F$ on $\Gamma(H\lex G,(u,0))$ has the {\it finiteness property} if $F$ has only finitely many characteristic points. A special attention will be paid for those $F$ for which every non-empty $T_h$ has a unique characteristic point.

As we have seen in the proof of Theorem \ref{th:approx6}, every characteristic point of $F$ is under a unique characteristic point of $T_u$. We note that if $h_1< h_2$, $h_1,h_2 \in (0,u)_H$, and if $(s_{h_i},t_{h_1}), (s_{h_2},t_{h_2})$ are characteristic points of $T_{h_1}$ and $T_{h_2}$, respectively, then it does not mean $(s_{h_i},t_{h_1})\le (s_{h_2},t_{h_2})$, in general, even if every non-empty $T_h$ has a unique characteristic point.  Indeed, see Example \ref{ex:3.7}(8) and characteristic points $(1,2)$ and $(2,1)$.

For the next proposition, we introduce the following notion. We say that a block $B$ of $T_h$ of an $n$-dimensional spectral resolution $F$ on $\Gamma(H\lex G,(u,0))$ is $T_0$-{\it adjoined} if, for some $\mathbf s =(s_1,\ldots,s_n) \in B$ (and therefore for each $\mathbf s \in B$), we have that $(s_1,\ldots,s_{j-1},\pi^h_j(\mathbf s),s_{j+1},\ldots,s_n)\in T_0$ for each $j=1,\ldots,n$.

\begin{proposition}\label{pr:approx}
Let $F$ be an $n$-dimensional spectral resolution on a $\Rad$-Dedekind $\sigma$-complete $k$-perfect MV-algebra $M$ such that there are finitely many $T_0$-adjoint blocks $B_1,\ldots,B_m$ with the property that
$$
a_j=\bigwedge\{F(s_1,\ldots,s_n)\colon (s_1,\ldots,s_n)\in B_j\}
$$
exists in $M$ for each $j=1,\ldots,m$ and $\sum_{j=1}^ma_j=1$, and let $T_0=\{(s_1,\ldots,s_n)\in \mathbb R^n\colon F(s_1,\ldots,s_n)=0\}$. Then $F$ has finitely many characteristic points, and there is an $n$-dimensional observable $x$ on $M$ such that $F=F_x$.
\end{proposition}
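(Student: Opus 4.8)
The plan is to build the observable explicitly as a discrete observable of the form (\ref{eq:obser}) supported on the characteristic points of the given blocks, and then to verify that its spectral resolution is exactly $F$; the finiteness of characteristic points will fall out for free once this identity is established.

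First I would record two consequences of the hypotheses. For each $T_0$-adjoined block $B_j$ let $\mathbf t_j=\pi(\mathbf s)$ be its unique characteristic point, where $\mathbf s\in B_j$; since dropping any single coordinate of $\mathbf s$ to its projection lands in $T_0=\{F=0\}$ and $F$ is monotone with $F\ge 0$, every such coordinate-drop forces the value $0$, so in particular $F(\mathbf t_j)=0$ and every proper face of a rectangle with lower corner $\mathbf t_j$ is annihilated by $F$. Consequently, for $\mathbf t_j\ll\mathbf s\le\mathbf s_0$ with $\mathbf s_0\in B_j$, every vertex of $[\,\mathbf t_j,\mathbf s)$ except $\mathbf s$ is a $T_0$-point, whence $V(F,[\,\mathbf t_j,\mathbf s))=F(\mathbf s)$; here Lemma~\ref{le:approx2} guarantees that all such $\mathbf s$ lie in $B_j$ and that $\bigwedge_{\mathbf s\in B_j}F(\mathbf s)=a_j\in M_{i_j}$ exists. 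Taking the infimum over the block and telescoping as in the proofs of Propositions~\ref{pr:3.3} and \ref{pr:3.4}, the mass concentrated at $\mathbf t_j$ equals $a_j$. After merging any blocks that happen to share a characteristic point, I then set $x(A):=\sum_{j:\mathbf t_j\in A}a_j$; because $\sum_{j=1}^m a_j=1$ and the family is finite, (\ref{eq:obser}) and Lemma~\ref{le:2.1}(3) certify that $x$ is a genuine $n$-dimensional observable on $M$.

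The core step is to prove $F=F_x$, for which it suffices to check that $F$ and $F_x$ have equal volumes on every half-open rectangle together with the correct limit behaviour, since then $F(\mathbf t)=\bigvee_N V(F,[-N,t_1)\times\cdots\times[-N,t_n))=\bigvee_N V(F_x,\cdots)=F_x(\mathbf t)$ by (\ref{eq:(3.4)}) and Lemma~\ref{le:2.1}(1), the shell contributions with a coordinate at $-N$ vanishing in the limit. For a half-open rectangle $A$, a grid whose hyperplanes pass through the coordinates of each $\mathbf t_j$ puts every characteristic point at the lower corner of a unique cell, and finite additivity of the volume functional gives $V(F,A)=\sum_{\text{cells}}V(F,\cdot)$; the step above shows the cell with lower corner $\mathbf t_j$ contributes $a_j$ while every characteristic-point-free cell contributes a non-negative amount. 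It remains to show these residual contributions are $0$, and this is exactly where $\sum_{j}a_j=1$ is indispensable: applying the same decomposition to the whole space gives $V(F,\mathbb R^n)=\bigvee_N V(F,[-N,N)^n)=1=\sum_j a_j$ by (\ref{eq:(3.2)}), so the non-negative volumes of all characteristic-point-free cells sum to $0$ and must each vanish. Hence $V(F,A)=\sum_{\mathbf t_j\in A}a_j=V(F_x,A)$, and $F=F_x$ follows. Finally, the characteristic points of $F=F_x$ are among the finitely many support points $\mathbf t_j$, so $F$ has finitely many characteristic points.

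The main obstacle is precisely this conservation-of-mass argument: $M_0\cong G^+$ contains nonzero infinitesimal elements, so a priori the volume of a characteristic-point-free rectangle could be a nonzero element of $M_0$, and nothing local rules this out. It is only the global identity $\sum_j a_j=1$, together with non-negativity and finite additivity of $V(F,\cdot)$ and the $\Rad$-Dedekind $\sigma$-completeness of $M$ (needed, via Lemma~\ref{le:2.1}, for the suprema $\bigvee_N$ and the infima $a_j$ to exist and be attained in the correct slice), that forces all residual volumes to vanish and pins $F$ down to $F_x$.
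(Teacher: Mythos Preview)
Your approach is essentially the same as the paper's: define the discrete observable $x$ supported on the characteristic points $\mathbf t_j$ of the $T_0$-adjoined blocks with weights $a_j$ via formula (\ref{eq:obser}), and conclude $F=F_x$. The paper's own proof is extremely terse and simply asserts $F=F_x$ without further justification, whereas you supply an explicit conservation-of-mass argument to verify it.

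Your additional detail is sound. Two remarks that tighten it. First, the hypothesis $T_0=\{F=0\}$ lets you avoid limits entirely: for $N$ large enough each vertex of $[-N,t_1)\times\cdots\times[-N,t_n)$ having a coordinate $-N$ lies in $T_0$ (since by (\ref{eq:(3.4)}) its $F$-value eventually drops to $M_0$, and then the hypothesis forces it to be $0$), so $V(F,[-N,t_1)\times\cdots\times[-N,t_n))=F(\mathbf t)$ exactly, not just in the limit. Second, for the grid argument it is cleaner to note that for each $\mathbf t_l$ and small $\epsilon>0$ the box $[\mathbf t_l,\mathbf t_l+\epsilon\mathbf 1)$ has volume $F(\mathbf t_l+\epsilon\mathbf 1)\ge a_l$; these boxes are disjoint inside $[-N,N)^n$, so $1\ge V(F,[-N,N)^n)\ge\sum_l a_l=1$, forcing equality. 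Refining the grid by the faces of any rectangle $A$ then gives $V(F,A)=\sum_{\mathbf t_l\in A}a_l=V(F_x,A)$ as you claim. The ``merging'' caveat is unnecessary: two distinct $T_0$-adjoined blocks cannot share a characteristic point, since the $M_i$-slice of $F(\mathbf s)$ for $\mathbf s$ just above that point is uniquely determined.
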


\begin{proof}
Let $\mathbf s_j$ be a unique characteristic point of a $T_0$-adjoint block $B_j$ for each $j=1,\ldots,m$. If we take these characteristic points and the elements $a_1,\ldots,a_m$ from the conditions of the Proposition, the mapping $x: \mathcal B(\mathbb R^n)\to M$ defined by (\ref{eq:obser}) is an $n$-dimensional observable on $M$ and $F=F_x$. Therefore, each non-empty $T_i$ has only finitely many characteristic points.
\end{proof}

For example, such a two-dimensional spectral resolution satisfying the latter proposition is described in Example \ref{ex:3.7}(7). On the other hand, in Example \ref{ex:3.7}(9), we have two blocks $B_1$ and $B_2$ such that $a'_1=(1,2)$ and $a'_2=(1,-2)$, where $a'_1$ and $a'_2$ are elements calculated by Proposition \ref{pr:approx}. Then $a'_1+a'_2=1$, but the formula (\ref{eq:obser}) defines an observable $x'$ such that $F\ne F_{x'}$ because $T_0\ne \{(s,t)\in \mathbb R^2 \colon F(s,t)=0\}$.

\begin{theorem}\label{th:approx-num'}
Let $F$ be a two-dimensional spectral resolution on a $\Rad$-Dedekind $\sigma$-complete $k$-perfect MV-algebra $M$.
Then every non-empty $T_i$ has at most $k-i+1$ characteristic points and whence, $F$ has at most $k(k+1)/2$ characteristic points.
\end{theorem}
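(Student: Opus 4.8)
The plan is to sharpen the counting argument used in the proof of Theorem \ref{th:approx4}, keeping Lemma \ref{le:rays} as the engine but anchoring the resulting chain of levels from below. Fix $i\in\{1,\ldots,k\}$ with $T_i\ne\emptyset$ and let $(s_1,t_1),\ldots,(s_m,t_m)$ be any finite system of mutually distinct characteristic points of $T_i$. Distinct characteristic points of $T_i$ lie in distinct blocks, so the note preceding Lemma \ref{le:rays} applies: for any two of them $s_p<s_q$ is equivalent to $t_p>t_q$. Hence, after reindexing, I may assume $s_1<\cdots<s_m$ and $t_1>\cdots>t_m$, so these points form a strictly decreasing antichain. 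The target bound $m\le k-i+1$ will follow once I exhibit, on a single horizontal line lying above all of them, $m$ points whose $F$-values occupy strictly increasing blocks $M_h$, the leftmost of which already has index at least $i$.

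Concretely, I would fix a real $t^*>t_1$ (hence $t^*>t_j$ for every $j$) and choose reals $u_1<\cdots<u_m$ interlacing the $s_j$'s, say $s_j<u_j<s_{j+1}$ for $j<m$ and $s_m<u_m$; then automatically $u_{j-1}<s_j$ for $j\ge 2$. For each $j\in\{2,\ldots,m\}$ I apply Lemma \ref{le:rays} to the characteristic point $(s_j,t_j)$ and the two points $(u_{j-1},t^*),(u_j,t^*)$: since $u_{j-1}<s_j<u_j$ and $t^*>t_j$, writing $F(u_{j-1},t^*)\in M_{h_{j-1}}$ and $F(u_j,t^*)\in M_{h_j}$ gives $h_{j-1}<h_j$. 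Thus $F(u_1,t^*),\ldots,F(u_m,t^*)$ sit in blocks with $h_1<h_2<\cdots<h_m$.

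The crux, which I expect to be the main point requiring care, is to anchor this chain by proving $h_1\ge i$; this is precisely where the sharper constant $i$ enters, rather than the trivial lower bound $1$ implicit in Theorem \ref{th:approx4}. Here I would invoke the block description of Lemma \ref{le:approx2}: if $\mathbf s_1\in T_i$ is the point associated to $(s_1,t_1)=\pi_i(\mathbf s_1)$, then by \eqref{eq:approx1} the points $\mathbf r$ of $T_i$ with $(s_1,t_1)\ll\mathbf r\le\mathbf s_1$ form the block of $(s_1,t_1)$ and have infimum $(s_1,t_1)$; in particular the block contains points whose two coordinates are simultaneously arbitrarily close to those of $(s_1,t_1)$ from above. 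Since $s_1<u_1$ and $t_1<t^*$, I can pick such $\mathbf r=(\tau_1,\tau_2)\in T_i$ with $\tau_1\le u_1$ and $\tau_2\le t^*$, whence monotonicity \eqref{eq:(3.1)} yields $F(u_1,t^*)\ge F(\tau_1,\tau_2)\in M_i$, so $h_1\ge i$. Combining this with $h_1<\cdots<h_m\le k$ gives $i+(m-1)\le h_m\le k$, that is $m\le k-i+1$; as the system was arbitrary, $T_i$ has at most $k-i+1$ characteristic points.

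Finally, every characteristic point of $F$ is a characteristic point of a unique $T_i$ with $1\le i\le k$ (its block reaches level exactly $i$), so summing the per-level bounds yields at most $\sum_{i=1}^{k}(k-i+1)=\sum_{j=1}^{k}j=k(k+1)/2$ characteristic points of $F$. I would close with the two consistency checks: for $i=1$ the bound is $k$, recovering Theorem \ref{th:approx4}, and for $i=k$ it is $1$, recovering the uniqueness of the characteristic point of $T_k$ noted after Proposition \ref{pr:3.4}.
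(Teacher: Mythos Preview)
Your argument is correct and genuinely different from the paper's. The paper does not sharpen the ray argument of Theorem~\ref{th:approx4}; instead it first builds the finitely additive set function $x_F$ on the ring $\mathcal P(\mathbb R^2)$ of finite unions of semi-closed rectangles, then associates to the $m$ blocks of $T_i$ pairwise disjoint rectangles $C_1,\ldots,C_m$ whose volumes satisfy $x_F(C_1)\in M_i$ and $x_F(C_j)\notin M_0$ for $j\ge 2$. Summability of these volumes (their union sits below some $F(s,t)$) then forces $i+(m-1)\le k$. Your route bypasses this additivity machinery entirely: you stay on a single horizontal line, use Lemma~\ref{le:rays} exactly as in Theorem~\ref{th:approx4} to get $h_1<\cdots<h_m$, and the only new ingredient is the anchoring $h_1\ge i$ via a nearby block point. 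This is more elementary and makes transparent why the constant improves from $k$ to $k-i+1$. The paper's approach, on the other hand, packages the same information into orthogonal ``area'' elements, which is closer in spirit to how the eventual observable will be constructed in Part~II.

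Two small remarks. First, Lemma~\ref{le:approx2} is stated for $F=F_x$ with $x$ an observable, whereas Theorem~\ref{th:approx-num'} concerns arbitrary spectral resolutions; the part you actually use (that $\pi_i(\mathbf s)\ll\mathbf r\le\mathbf s$ implies $\mathbf r\in T_i$) is proved there using only the volume condition \eqref{eq:(3.5)}, so it carries over, and the paper remarks on this extension just before Theorem~\ref{th:approx6}---but you should say so explicitly. Second, in your final paragraph the word ``unique'' is unneeded and not obviously true (a single point of $\mathbb R^2$ could in principle be a characteristic point of two different $T_i$'s); since you only need an upper bound, simply summing the per-level bounds already gives $\sum_{i=1}^k(k-i+1)=k(k+1)/2$ without any disjointness claim.
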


\begin{proof}
(1) Define by $\mathcal P(\mathbb  R^2)$ the system of finite unions of semi-closed rectangles in $\mathbb R^2$. Then every element of $\mathcal P(\mathbb R^2)$ can be expressed as a finite union of mutually disjoint semi-closed rectangles. If $A=[ a_1,b_1)\times [ a_2,b_2)$ and let $c\in \mathbb R$ lie between $a_1$ and $b_1$, then $V(F,A)= V(F,A_1)+V(F,A_2)$, where $A_1 =[ a_1,c)\times [ a_2,b_2)$ and $A_2=[ a,b_1)\times [ a_2,b_2)$. Dually it holds for a point $d\in \mathbb R$ between $b_1$ and $b_2$. If we define $x_F(C)= \sum_{l=1}^n V(F,A_l)$, where $A=\bigcup_{l=1}^n A_l\in \mathcal P(\mathbb R^2)$ and $A_1,\ldots,A_n$ are mutually disjoint semi-closed rectangles, then it is possible to show that $x_F$ is correctly defined an additive mapping on $\mathcal P(\mathbb R^2)$. Moreover, if $C\in \mathcal P(\mathbb R^2)$ and $C \subseteq (-\infty,s)\times (-\infty,t)$, then $x_F(C)\le F(s,t)$; the latter property holds due to fact that if $D$ is the least semi-closed rectangle containing $C$, then $x_F(C)\le x_F(D)\le F(s,t)$. For more details on this construction, see \cite[Sec 8]{Hal}.

(2) According to Theorem \ref{th:approx4}, we know $F$ has finitely many characteristic points. Let $T_i\ne \emptyset$ for $i=1,\ldots,k$ and let $B_1,\ldots,B_m$ be all blocks of $T_i$. Let $(s_1,t_1)\in B_1,\ldots,(s_m,t_m)\in B_m$ be a fixed finite system of points from $T_i$.
Since for $(s_1,t_1)$ and $(s_2,t_2)$ we have either $s_1<s_2$ and $t_1>t_2$ or $s_1>s_2$ and $t_1<t_2$, without loss of generality, we can
assume that $s_1<\cdots<s_m$ and $t_1>\cdots> t_m$.

Define $s'_1$ to be a real number small enough, so that $(s'_1,t_1)\in T_0$ and for each $j$, $2\leq j\leq m$ define $s'_j:=\pi^i_1(s_j,t_j)$. Then $(s'_j,t_j)\in T_{j'}$, where $j'<j$. Next find a real $t$ small enough, so that $(s_m,t)\in T_0$ (and consequently $(s_j,t)\in T_0$ for each $j=1,\ldots,m$, by monotony). Now define for each $j=1,\ldots,m$ a rectangle $C_j=[ s'_j,s_j)\times [ t,t_j)$. The rectangles are pairwise disjoint ($s_j\leq s'_{j+1}$ for $j=1\ldots,m-1$) and due to the volume condition we can associate to them the area elements $x_1=V(F,C_1)\in M_i$, $x_j=V(F,C_2)\in M_{j-j'},\ldots, x_m=V(F,C_m)\in M_{m-m'}$.
We see that $x_1\in M_i$ and $x_j\notin M_0$ for $j=2,\ldots,m$. By part (1) of the present proof, we see that the elements $x_1,\ldots,x_n$ are summable. Therefore, we have $i+(m-1)\le i+\sum_{j=2}^m (j-j')\le k$,
$i+(m-1)\leq k$, equivalently $m\leq k-i+1$.

Consequently, the number of characteristic points of $F$ is at most $k+(k-1)+\cdots + 1=(k+1)k/2$.
\end{proof}

The bound from the last theorem is the best one: In the general situation, let $a_1,\ldots,a_k\in M_1$ be summable elements such that $a_1+\cdots+a_k=1$. Choose $(s_1,t_1),\ldots,(s_k,t_k)$ mutually different points in $\mathbb R^2$ and define an observable $x$ by (\ref{eq:obser}). Then for $F=F_x$, we have that every $T_i$, $i=1,\ldots,k$, has exactly $k-i+1$ characteristic points, and $F$ has $k(k+1)/2$ characteristic points.

\end{document}